\def\mydate{19 August 2016}
\newtheorem{proposition}{proposition}[section]
\newtheorem{theorem}[proposition]{Theorem}
\newtheorem{claim}[proposition]{Claim}
\newtheorem{prop}[proposition]{Proposition}
\newtheorem{lem}[proposition]{Lemma}
\newtheorem{thm}[proposition]{Theorem}
\newcommand{\qed}{\hfill$\square$\bigskip}
\newtheorem{definition}[proposition]{Definition}
\newcommand{\mcal}{\mathcal}
\newcommand{\C}{\mcal{C}}
\renewcommand{\P}{\mcal{P}}
\def\aa#1{{\color{magenta}#1}}
\def\aa#1{{#1}}
\def\xx#1{{\color{ForestGreen}#1}}
\def\xx#1{{\color{magenta}#1}}
\def\xx#1{{#1}}
\def\REM#1{\footnote{#1}}
\def\REM#1{}
\def\REMA#1{\footnote{#1}}
\def\REMA#1{}
\begin{document}
\font\smallrm=cmr8




\phantom{a}\vskip .25in
\centerline{{\large \bf  FIVE-LIST-COLORING GRAPHS ON SURFACES III.}}
\smallskip
\centerline{{\large\bf ONE LIST OF SIZE ONE AND ONE LIST OF SIZE TWO}}
\vskip.4in
\centerline{{\bf Luke Postle}%
\footnote{\texttt{lpostle@uwaterloo.ca}. Partially supported by NSERC under Discovery Grant No. 2014-06162.}} 
\smallskip
\centerline{Department of Combinatorics and Optimization}
\centerline{University of Waterloo}
\centerline{Waterloo, ON}
\centerline{Canada N2L 3G1}
\medskip
\centerline{and}

\medskip
\centerline{{\bf Robin Thomas}%
\footnote{\texttt{thomas@math.gatech.edu}. Partially supported by NSF under
Grant No.~DMS-1202640.}}
\smallskip
\centerline{School of Mathematics}
\centerline{Georgia Institute of Technology}
\centerline{Atlanta, Georgia  30332-0160, USA}

\vskip 1in \centerline{\bf ABSTRACT}
\bigskip

{
\noindent
Let $G$ be a plane graph with outer cycle $C$ and let $(L(v):v\in V(G))$ be a family of non-empty sets.
By an $L$-coloring 
of $G$ we mean a (proper) coloring $\phi$ of $G$  such that $\phi(v)\in L(v)$ for every vertex $v$ of $G$.
Thomassen proved that if $v_1,v_2\in V(C)$ are adjacent, $L(v_1)\ne L(v_2)$,
$|L(v)|\ge3$ for every $v\in V(C)-\{v_1,v_2\}$ and $|L(v)|\ge5$ for every $v\in V(G)-V(C)$,
then $G$ has an $L$-coloring.
What happens when $v_1$ and $v_2$ are not adjacent?
Then an $L$-coloring need not exist, but
in the first paper of this series we have shown that it exists if $|L(v_1)|,|L(v_2)|\ge2$.
Here we characterize when an $L$-coloring exists if $|L(v_1)|\ge1$ and $|L(v_2)|\ge2$.

This result is a lemma toward a more general theorem along the same lines, which
we will use to prove that minimally non-$L$-colorable planar graphs with two precolored cycles
of bounded length are of bounded size.
The latter result has  a number of applications which we pursue elsewhere.
}

\vfill \baselineskip 11pt \noindent January 22 2014. Revised \mydate.

\vfil\eject
\baselineskip 18pt

\section{Introduction}


\label{sec:intro}
All {\em graphs} in this paper are finite and simple; that is, they have no loops or parallel edges. 
{\em Paths} and {\em cycles} have no repeated vertices 
or edges. If $G$ is a graph and $L=(L(v):v\in V(G))$ is a family of non-empty sets, then we say that $L$ is a {\em list assignment for $G$}.
It is a {\em $k$-list-assignment}, if $|L(v)|\ge k$ for every vertex $v\in V(G)$.
An $L$-coloring  of $G$ is a (proper) coloring $\phi$ of $G$  such that $\phi(v)\in L(v)$
for every vertex $v$ of $G$.
We say that a graph $G$ is \emph{$k$-choosable}, also called \emph{$k$-list-colorable}, if for every $k$-list-assignment $L$ for $G$, $G$ has an $L$-coloring. 

%
%

One notable difference between list coloring and ordinary coloring is that the 
Four Color Theorem~\cite{4CT1, 4CT2} does not generalize to list-coloring. 
Indeed, Voigt~\cite{Voigt} constructed a planar graph that is not $4$-choosable. 
On the other hand Thomassen~\cite{ThomPlanar} proved that every planar graph is $5$-choosable.
His proof is remarkably short and beautiful. For the sake of the inductive argument he proves the following
stronger statement.

%

\begin{theorem}\label{Thom}
If $G$ is a plane graph with outer cycle $C$ and $P=p_1p_2$ is a path of length one in $C$ and $L$ is a list assignment with $|L(v)|\ge 5$ for all $v\in V(G)- V(C)$, $|L(v)|\ge 3$ for all $v\in V(C)- V(P)$, and $|L(p_1)|,|L(p_2)|\ge1$ with $L(p_1)\ne L(p_2)$, then $G$ is $L$-colorable.
\end{theorem}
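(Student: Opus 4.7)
The plan is to prove Theorem \ref{Thom} by induction on $|V(G)|$, following Thomassen's original strategy. Two preliminary reductions will simplify the setup. First, since $L(p_1)\ne L(p_2)$ and both are nonempty, we may choose singleton subsets $L'(p_i)\subseteq L(p_i)$ with $L'(p_1)\ne L'(p_2)$; any $L'$-coloring is an $L$-coloring, so we may assume $|L(p_1)|=|L(p_2)|=1$, say $L(p_1)=\{1\}$ and $L(p_2)=\{2\}$. Second, adding edges can only make list-coloring harder, so we may assume $G$ is a near-triangulation, i.e.\ every bounded face is a triangle. Write $C=v_1v_2\cdots v_k$ with $v_1=p_1$ and $v_2=p_2$. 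The base case $|V(G)|=2$ is immediate: color $p_1,p_2$ with $1,2$.

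For the inductive step, split into cases based on whether $C$ has a chord. If there is a chord $uv$, then $G$ is the union of two near-triangulations $G_1,G_2$ whose intersection is exactly the edge $uv$, and we may assume $P\subseteq G_1$. Apply induction to $G_1$ to obtain a coloring $\phi_1$; now $\phi_1(u)\ne\phi_1(v)$, so replacing $L$ on $\{u,v\}$ by the singletons $\{\phi_1(u)\},\{\phi_1(v)\}$ satisfies the hypotheses in $G_2$ with the precolored path $uv$. A second application of induction yields a coloring $\phi_2$ of $G_2$, and $\phi_1\cup\phi_2$ is the desired $L$-coloring of $G$.

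If $C$ is chordless, peel off the vertex $v_k$ (the neighbor of $v_1$ on $C$ distinct from $v_2$). Because $G$ is a near-triangulation and $C$ has no chord, the neighbors of $v_k$ form a path $v_1,u_1,u_2,\ldots,u_t,v_{k-1}$ in which $u_1,\ldots,u_t$ lie in $V(G)\setminus V(C)$. Choose two distinct colors $\alpha,\beta\in L(v_k)\setminus\{1\}$; this is possible since $|L(v_k)|\ge3$. Delete $v_k$ and let $L'(u_i):=L(u_i)\setminus\{\alpha,\beta\}$, which still has size $\ge3$, keeping other lists unchanged. The new outer cycle $v_1v_2\cdots v_{k-1}u_tu_{t-1}\cdots u_1v_1$ and list assignment $L'$ satisfy the hypotheses of the theorem with the same precolored edge $p_1p_2$, so induction produces an $L'$-coloring $\phi$ of $G-v_k$. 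To extend $\phi$ to $v_k$, observe that $\phi(v_1)=1$ is distinct from $\alpha,\beta$, that $\phi(u_i)\notin\{\alpha,\beta\}$ for every $i$, and that $\phi(v_{k-1})$ can coincide with at most one of $\alpha,\beta$; whichever of $\alpha,\beta$ differs from $\phi(v_{k-1})$ is a legal color for $v_k$.

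The crux of the argument, and the reason for stating the stronger theorem rather than the bare $5$-choosability statement, is precisely this last step: the inductive hypothesis must allow both a precolored edge on the boundary and lists of size $3$ elsewhere on the boundary, because the peeling of $v_k$ creates new boundary vertices $u_i$ whose lists have just been trimmed from size $5$ to size $3$. The main obstacle in writing up the proof is bookkeeping for the chord case to ensure each piece inherits a valid list assignment of the required form, together with the routine verification that no degenerate configuration (e.g.\ $k=2$, or the "chord" coinciding with $p_1p_2$) slips through the case analysis.
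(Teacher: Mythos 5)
The paper does not give its own proof of Theorem~\ref{Thom}; it cites the result from Thomassen~\cite{ThomPlanar} and only restates it in the language of canvases as Theorem~\ref{ThomPath2}. Your argument is a correct reproduction of Thomassen's inductive proof---normalize $L(p_1),L(p_2)$ to distinct singletons, triangulate, split along a chord of $C$ if one exists, otherwise peel the neighbor $v_k$ of $p_1$ on $C$ distinct from $p_2$ while reserving two colors of $L(v_k)\setminus L(p_1)$ and deleting them from the lists of the previously interior neighbors of $v_k$. The only blemish is the base case: under the outer-cycle hypothesis the minimal instance is a triangle with $|V(G)|=3$, not an edge with $|V(G)|=2$; there the third vertex has a list of size at least $3$ and so can be colored to avoid $L(p_1)\cup L(p_2)$.
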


What if $p_1$ and $p_2$ are not adjacent?
In that case $G$ need not be $L$-colorable, but it is possible to characterize instances
when it is not. In fact, we are able to extract useful information even when more vertices are pre-colored,
but it will take some effort. We began this line of research
in our previous paper~\cite{PT1}, where we proved a generalization of Theorem~\ref{Thom} conjectured by Hutchinson~\cite{HutchOuterplanar}, who proved the result for outerplanar graphs. 

\begin{theorem}\label{TwoTwos2}
If $G$ is a plane graph with outer cycle $C$ and $p_1,p_2\in V(C)$ and $L$ is a list assignment with $|L(v)|\ge 5$ for all $v\in V(G)- V(C)$, $|L(v)|\ge 3$ for all $v\in V(C)-\{p_1,p_2\}$, and $|L(p_1)|,|L(p_2)|\ge2$, then $G$ is $L$-colorable.
\end{theorem}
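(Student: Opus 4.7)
The plan is to prove Theorem~\ref{TwoTwos2} by induction on $|V(G)|$, using Theorem~\ref{Thom} as the main non-inductive tool. I would begin with standard reductions: split $G$ at any cut vertex and apply induction to the blocks to reduce to the $2$-connected case, and split along any chord of $C$ (choosing colors on the chord's endpoints appropriately) to reduce to the case where $C$ is an induced cycle.

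For the base case $p_1p_2 \in E(C)$, the result follows directly from Theorem~\ref{Thom}: pick $c_1 \in L(p_1)$ and $c_2 \in L(p_2) \setminus \{c_1\}$, possible because $|L(p_2)|\ge 2$, and replace $L(p_1)$ by $\{c_1\}$ and $L(p_2)$ by $\{c_2\}$. These are distinct singletons on an edge of $C$, so Theorem~\ref{Thom} yields an $L$-coloring.

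For the inductive step, assume $p_1$ and $p_2$ are non-adjacent on $C$, and let $w_1, w_2$ be the two cycle-neighbors of $p_1$, so $|L(w_i)|\ge 3$. The strategy is to pick a color $c_1 \in L(p_1)$ and a neighbor $w \in \{w_1, w_2\}$ such that the smaller plane graph $G':=G-p_1$ with modified list assignment $L'(v):=L(v)\setminus\{c_1\}$ for $v \in N(p_1)$ and $L'(v):=L(v)$ otherwise still satisfies the hypotheses of Theorem~\ref{TwoTwos2}---now with $w$ and $p_2$ as the two precolored vertices of list size $\ge 2$, and the other cycle-neighbor $w'$ of $p_1$ retaining list size $\ge 3$. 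The interior neighbors of $p_1$, which join the outer cycle of $G'$, retain lists of size at least $5-1=4\ge 3$, as required. Combining an $L'$-coloring of $G'$ (obtained by induction) with $\phi(p_1):=c_1$ then yields the desired $L$-coloring of $G$.

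The main obstacle is that such a pair $(c_1, w)$ may not always exist: the requirement $c_1 \notin L(w')$ can fail for every choice of $c_1 \in L(p_1)$ and both candidates $w' \in \{w_1, w_2\}$, which happens precisely when $L(p_1) \subseteq L(w_1) \cap L(w_2)$. A secondary obstacle is that $G - p_1$ may fail to be $2$-connected, for instance if $p_1$ has an interior neighbor whose deletion also disconnects $G - p_1$; in that case further block decomposition and separate inductive applications are required. Disposing of these degenerate configurations---via ad hoc arguments using Theorem~\ref{Thom} directly on subgraphs delimited by near-chords through neighbors of $p_1$, or by carefully tracking list sizes through a more refined reduction that precolors both $w_1$ and $w_2$---forms the technical heart of the proof.
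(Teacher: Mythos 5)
The paper does not actually prove Theorem~\ref{TwoTwos2}: it is quoted from the authors' earlier paper~\cite{PT1}. The present paper does establish the strictly stronger Theorem~\ref{thm:harmon}, from which Theorem~\ref{TwoTwos2} follows (a coloring harmonica from $p_1$ would force $|L(p_1)|=1$), but that proof runs through the entire apparatus of governments, democracies, confederacies, harmonicas and democratic reductions rather than the direct induction you sketch. There is therefore no ``paper's own proof'' here to compare against, and I can only judge the proposal on its own terms.

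On those terms the proposal has a genuine gap, one you flag yourself but leave unresolved: the case $L(p_1)\subseteq L(w_1)\cap L(w_2)$ with $|L(w_1)|=|L(w_2)|=3$. There every choice of $c_1\in L(p_1)$ leaves three boundary vertices of the reduced graph, namely $w_1$, $w_2$ and $p_2$, with lists of size $2$, which is not an instance of Theorem~\ref{TwoTwos2}; and a three-list-of-size-two analogue is false in general (a triangle all of whose lists equal $\{1,2\}$). Precoloring both $w_1$ and $w_2$ does not obviously help either: if $w_1$ and $w_2$ are adjacent you must first arrange distinct colors for them, and after deleting $p_1,w_1,w_2$ you lose control of list sizes at several of their neighbors simultaneously. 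This configuration is not a degenerate corner case but the essential content of the theorem, and it is exactly what the democratic-reduction idea of Section~3 is built to handle: delete a maximal boundary path on which a fixed $2$-set $L_0\subseteq L(p_1)$ persists as a subset of every list, and subtract $L_0$ from the lists of the neighbors of that path. Your preliminary chord reduction is also not as clean as asserted: if a chord $ab$ separates $p_1$ from $p_2$ and you color the side containing $p_1$ first, on the other side you are left with a precolored edge $ab$ together with a third boundary vertex $p_2$ of list size $2$, which falls outside both Theorem~\ref{Thom} and your inductive hypothesis. Until both issues are handled concretely, the proposal is a plausible outline rather than a proof.
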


The main result of this paper is to characterize when an $L$-coloring exists, if in
Theorem~\ref{TwoTwos2} we only assume that $|L(p_1)|\ge1$. In order to state the theorem
we need to define a family of obstructions.

Let $G$ be a connected plane graph, and let $u,v,w$ be distinct vertices of $G$ incident 
with the outer face of $G$,
let $u$ be adjacent to $v$,  let the edge $uv$ be incident with the outer face of $G$
and let $L$ be a list assignment for $G$.
We say that the pair $(G,L)$ is a {\em coloring harmonica from $uv$ to $w$} if either
\begin{itemize}
\item  $G$ is a triangle with vertex set $\{u,v,w\}$, $L(u)=L(v)=L(w)$ and $|L(u)|=2$, or
\item there exists a vertex $z\in V(G)$ incident with the outer face of $G$
such that $uvz$ is a triangle in $G$,
$L(u)=L(v)\subseteq L(z)$,  $|L(u)|=|L(v)|=2$, $|L(z)|=3$, and $(G',L')$ is
a coloring harmonica from $z$ to $w$, where $G'$ is obtained from $G$ by deleting
 one
or both of the vertices $u,v$, and $L'$ satisfies  $L'(z)=L(z)-L(u)$ and $L'(x)=L(x)$ 
for every $x\in V(G')-\{z\}$.
\end{itemize}

\noindent 
We say that the pair $(G,L)$ is a {\em coloring harmonica from $u$ to $w$} if 
\begin{itemize}
\item there exist  vertices $x,y\in V(G)$  incident with the outer face of $G$ such that $uxy$ is a triangle in $G$,
$|L(u)|=1$, $L(x)- L(u)=L(y)-L(u)$,  $|L(x)-L(u)|=2$,  and $(G',L')$ is
a coloring harmonica from $xy$ to $w$, where $G':=G\setminus u$,
   $L'(x)=L(x)-L(u)$,  $L'(y)=L(y)-L(u)$ and $L'(z)=L(z)$ for every $z\in V(G')-\{x,y\}$.
\end{itemize}

\noindent 
See Figure~\ref{fig:harmon}.
We say that the pair $(G,L)$ is a {\em coloring harmonica} if it is either a
coloring harmonica from $uv$ to $w$ or a coloring harmonica from $u$ to $w$, where
 $u,v,w$ are as specified earlier.
We say that the pair $(G,L)$ {\em contains a  coloring harmonica $(G',L')$} 
if $G'$ is a subgraph of $G$ and $L'(x)=L(x)$ for every $x\in V(G')$.


\begin{figure}[htb]
 \centering
\includegraphics[scale = .25]{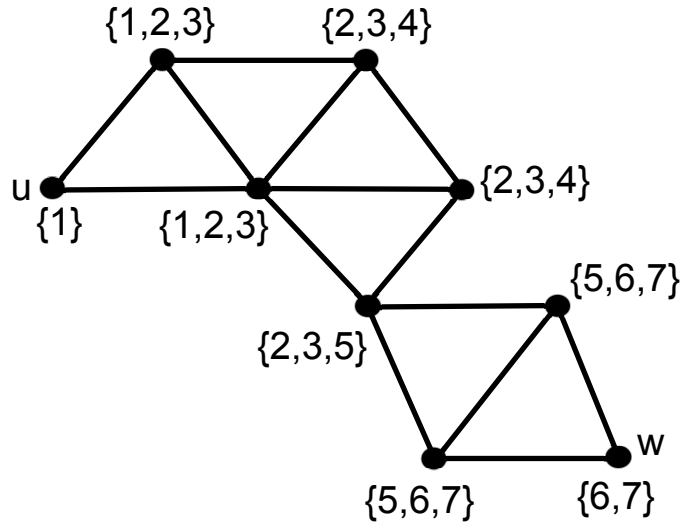}
 \caption{A coloring harmonica from $u$ to $w$.}
\label{fig:harmon}
\end{figure}

We can now state the main result of this paper.
Hutchinson~\cite{HutchOuterplanar} proved it for outerplanar graphs.

\begin{theorem}
\label{thm:harmon}
Let $G$ be a plane graph with outer cycle $C$, let $p_1,p_2\in V(C)$, and let $L$ be a list assignment 
with $|L(v)|\ge 5$ for all $v\in V(G)- V(C)$, $|L(v)|\ge 3$ for all $v\in V(C)- \{p_1,p_2\}$,  $|L(p_1)|\ge1$ and $|L(p_2)|\ge2$. Then $G$ is $L$-colorable if and only if the pair $(G,L)$
does not contain a coloring  harmonica from $p_1$ to $p_2$.
\end{theorem}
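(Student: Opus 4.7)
I would first prove by induction on the recursive construction of a harmonica $(H, L_H)$ that $H$ has no $L_H$-coloring. The base triangle $uvw$ with $L(u)=L(v)=L(w)$ of size~$2$ plainly has no proper coloring. For a harmonica from $uv$ to $w$ built from a smaller harmonica $(G',L')$ via the triangle $uvz$, any $L_H$-coloring must assign $u,v$ the two distinct colors of $L(u)=L(v)$, whence $z$ is forced to use a color from $L(z)-L(u)=L'(z)$; the restriction to $G'$ is then an $L'$-coloring of $(G',L')$, contradicting the inductive hypothesis. The step for a harmonica from $u$ to $w$ is analogous: precoloring $u$ with its unique color reduces the effective lists at $x,y$ to $L'(x)=L'(y)=L(x)-L(u)$ of size~$2$, so $(G',L')$ would be an $L'$-colorable harmonica from $xy$ to $w$, a contradiction. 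Since ``$(G,L)$ contains $(H,L_H)$'' means $H\subseteq G$ with identical lists, every $L$-coloring of $G$ would restrict to an $L_H$-coloring of $H$, proving the ``only if'' direction.

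\textbf{Sufficiency.} I would prove the nontrivial direction by induction on $|V(G)|$, assuming $(G,L)$ contains no coloring harmonica from $p_1$ to $p_2$. If $|L(p_1)|\ge 2$, then no harmonica from $p_1$ exists by definition, so Theorem~\ref{TwoTwos2} immediately supplies an $L$-coloring; hence we may assume $L(p_1)=\{\alpha\}$. Thomassen-style reductions then dispose of the easy structural cases in which $G$ has a cut-vertex, $C$ has a chord, or $G$ contains a separating triangle: in each such case, split $G$ into two smaller plane subgraphs sharing a vertex, an edge, or a triangle, and apply the inductive hypothesis to each piece with appropriately transferred boundary vertices and list assignments. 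Either an $L$-coloring of $G$ is obtained by gluing, or induction returns a harmonica in a piece; one then argues that this harmonica either is itself, or combines with the shared separator, to yield a harmonica from $p_1$ to $p_2$ in $(G,L)$, contradicting the assumption.

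\textbf{Main case and main obstacle.} Having reduced to the situation where $G$ is $2$-connected, $C$ is induced, and $G$ has no separating triangle, let $x$ be the neighbor of $p_1$ on $C$ on the $p_2$-side. If $\alpha\notin L(x)$, color $p_1$ with $\alpha$ and apply Theorem~\ref{TwoTwos2} to $G-p_1$ with $x$ playing the role of $p_1$ (using $|L(x)|\ge 3$). If $\alpha\in L(x)$, set $L'(v):=L(v)\setminus\{\alpha\}$ for every neighbor $v$ of $p_1$ and $L'(v):=L(v)$ otherwise, and apply induction to $(G-p_1,L')$ with $x$ as the new $p_1$-analog: either the returned $L'$-coloring extends via $p_1\mapsto\alpha$ to an $L$-coloring of $G$, or induction provides a harmonica from $x$ (or from an edge at $x$) to $p_2$ in $G-p_1$. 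The main obstacle is this latter case: one must show that the local structure at $p_1$ supplies a triangle $p_1xy$ in $G$ with $\alpha\in L(x)\cap L(y)$ and $L(x)-\{\alpha\}=L(y)-\{\alpha\}$ of size~$2$, so that prepending this triangle to the returned harmonica yields a harmonica from $p_1$ to $p_2$ in $(G,L)$, contradicting the assumption. Forcing this triangle to appear --- and, more generally, tracking exactly how each type of obstruction lifts across the preceding reductions --- is the delicate core of the argument, and will require exploiting the absence of chords and separating triangles together with repeated appeal to Theorem~\ref{TwoTwos2} on small auxiliary subgraphs that detach at $p_1$ and $x$.
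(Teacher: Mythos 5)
Your ``necessity'' direction is fine (the paper waves it off as trivial), and the standard reductions --- cut-vertex, chord, separating triangle --- are indeed part of the machinery. But your main case has a gap that is not merely ``delicate'' but fatal to a direct induction of this form, and it is essentially the reason the paper develops a much stronger statement rather than inducting on Theorem~\ref{thm:harmon} itself.

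The problem is your step where you delete $p_1$, set $L'(v) := L(v)\setminus\{\alpha\}$ for all neighbors $v$ of $p_1$, and try to recurse on $(G-p_1, L')$ with $x$ as ``the new $p_1$-analog.'' The vertex $p_1$ has \emph{two} neighbors on $C$, say $x$ (toward $p_2$) and $x'$ (away from $p_2$), and each must lose $\alpha$ if it appears there. After deletion you may therefore be left with three boundary vertices whose lists could have size two: $x$, $x'$, and $p_2$. Neither the inductive hypothesis (one vertex of size $\ge 1$ and one of size $\ge 2$) nor Theorem~\ref{TwoTwos2} (two vertices of size $\ge 2$, everything else $\ge 3$) covers that configuration, and you cannot avoid reducing $x'$ since it is adjacent to $p_1$ which is forced to be $\alpha$. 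So the induction simply does not close.

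The paper's route around this is not a cleverer choice of reduction but a strengthening of what is inducted on. It introduces \emph{governments} and \emph{confederacies} (small structured sets of colorings of a boundary edge $P$), proves Theorem~\ref{Harmonica2} --- that a government on $P$ always extends to a government on any other boundary edge $P'$, and extends to a confederacy unless a harmonica sits between $P$ and $P'$ --- and only then deduces Theorem~\ref{thm:harmon} as a short corollary via Lemma~\ref{lem:harmonicas}, after artificially enlarging $L(p_2)$ by a dummy color so that $p_2$ lies on a genuine boundary edge $P'$ with full-size list. That reformulation is what lets the recursion pass through chords, cut-vertices, and the vertex-deletion/democratic-reduction step without ever producing too many short lists at once, because the object being tracked is a pair of colorings of an edge rather than a color of a single vertex. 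If you want to pursue a direct induction you would need to discover some equivalent strengthening; as written, the ``main case'' cannot be completed with the stated hypotheses.
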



\section{Canvases}\label{sec:canvases}

We also recall the definition of the graphs we are working with, first introduced in our previous paper~\cite{PT1}.

\begin{definition}[Canvas]

We say that $(G, S, L)$ is a \emph{canvas} if $G$ is a plane graph, $S$ is a subgraph of the boundary of the outer  face of $G$, and $L$ is a list assignment for some supergraph of $G$ such that $|L(v)|\ge 5$ for all $v\in V(G)- V(C)$, where $C$ is the boundary of the outer  face of $G$, $|L(v)|\ge 3$ for all $v\in V(G)- V(S)$, and there exists a proper $L$-coloring of $S$.
%
\REMA{Deleted: We say the canvas $(G,S,L)$ is \emph{$L$-critical} if there does not an exist an $L$-coloring of $G$ but for every edge $e\not\in E(S)$, there exists an $L$-coloring of $G\setminus e$, and for every vertex $v\not\in V(S)$, there exists an $L$-coloring of $G\setminus v$.}
\end{definition}


We should remark that we allow $L$ to be a list assignment of some supergraph of $G$ merely for convenience when passing to subgraphs. Given this definition of a canvas, we can state an equivalent but slightly more general version of Theorem~\ref{Thom} as follows.

\begin{thm}\label{ThomPath2}
If $(G,S,L)$ is a canvas and $S$ is path of length one, then $G$ is $L$-colorable. 
\end{thm}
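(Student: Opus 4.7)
\emph{Proof plan.} The statement is essentially a restatement of Thomassen's Theorem~\ref{Thom} in the canvas framework, and the plan is to reduce directly to it. Write $S=p_1p_2$. Two small discrepancies between the canvas hypothesis and that of Theorem~\ref{Thom} must be reconciled: \emph{(i)} the canvas only requires that $S$ admit a proper $L$-coloring, rather than insisting that $L(p_1)\neq L(p_2)$ with $|L(p_i)|\geq 1$; and \emph{(ii)} the canvas does not require the outer face of $G$ to be bounded by a cycle.

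For \emph{(i)}, the proper $L$-coloring of $S$ guaranteed by the canvas definition exhibits colors $c_i\in L(p_i)$ with $c_1\neq c_2$; by replacing $L(p_i)$ with $\{c_i\}$ we obtain a list assignment $L'$ with $L'(p_1)\neq L'(p_2)$ and $|L'(p_i)|=1$, and any $L'$-coloring of $G$ is an $L$-coloring. So we may assume $L(p_1)$ and $L(p_2)$ are distinct singletons. For \emph{(ii)}, we proceed by induction on $|V(G)|$, with the trivial base case $V(G)=\{p_1,p_2\}$. If $G$ is 2-connected, the boundary of its outer face is a cycle containing the edge $p_1p_2$, and Theorem~\ref{Thom} yields the desired coloring. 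Otherwise, pick a cut vertex $v$ of $G$ and decompose $G=G_1\cup G_2$ with $V(G_1)\cap V(G_2)=\{v\}$, chosen so that $p_1p_2\in E(G_1)$. By induction $G_1$ admits an $L$-coloring $\phi_1$; to extend to $G_2$, choose any neighbor $w$ of $v$ in $G_2$ lying on the outer face of $G_2$, pick any color $c\in L(w)\setminus\{\phi_1(v)\}$ (nonempty since $w\notin V(S)$ gives $|L(w)|\geq 3$), and apply induction to the canvas $(G_2,vw,L'')$, where $L''(v)=\{\phi_1(v)\}$, $L''(w)=\{c\}$, and $L''$ agrees with $L$ elsewhere; the resulting coloring of $G_2$ agrees with $\phi_1$ at $v$ and so combines with $\phi_1$ into an $L$-coloring of $G$.

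The main obstacle is routine bookkeeping in the non-2-connected case: one must check that the canvas properties transfer to $G_1$ and $G_2$ (in particular that $|L(x)|\geq 5$ for interior vertices and $|L(x)|\geq 3$ for the remaining outer-face vertices in each piece, which is inherited from $G$), and that $vw$ indeed sits on the outer face of $G_2$, which follows because $v$ lies on the outer face of $G_2$ as a cut vertex of $G$ and $w$ is chosen among its outer-face neighbours.
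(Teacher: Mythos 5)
The paper offers no explicit proof of Theorem~\ref{ThomPath2}; it simply presents it as ``an equivalent but slightly more general version'' of Theorem~\ref{Thom}, so the task is precisely the routine reduction you carry out, and your two reductions \emph{(i)} (shrinking $L(p_1),L(p_2)$ to distinct singletons, which is exactly what ``there exists a proper $L$-coloring of $S$'' buys you) and \emph{(ii)} (inducting through cut vertices to reach the $2$-connected case where the outer boundary is a cycle) are both the right moves, and the bookkeeping you describe---that interior/boundary vertices of $G_1$ and $G_2$ inherit the right list sizes, that $v$ lies on the outer face of $G_2$, and that the pieces glue along $v$---all checks out.

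There is, however, a small genuine gap: your case analysis assumes $G$ is connected. The canvas definition does not require this, and if $G$ is disconnected it has no cut vertex, so the ``otherwise pick a cut vertex $v$'' step has nothing to pick. You should first dispose of components $H$ of $G$ not containing $S$: every vertex of such $H$ has $|L|\geq 3$ (and $\geq 5$ if interior to $H$), so if $H$ has an edge $uw$ on its outer face, shrink $L(u),L(w)$ to distinct singletons and apply the induction hypothesis to the smaller canvas $(H,uw,L)$; if $H$ is a single vertex, color it arbitrarily from its list. With that extra sentence the induction on $|V(G)|$ goes through in all cases.
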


We can also restate Theorem~\ref{TwoTwos2} in these terms.

\begin{thm}[Two with List of Size Two Theorem]\label{TwoTwos} 
If $(G,S,L)$ is a canvas with  $\allowbreak V(S)=\{v_1,v_2\}$ and $|L(v_1)|,|L(v_2)|\ge 2$, then $G$ is $L$-colorable.
\end{thm}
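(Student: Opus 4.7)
Theorem~\ref{TwoTwos} restates Theorem~\ref{TwoTwos2} in the canvas language of Section~\ref{sec:canvases}. The only genuine content added by the canvas formulation is that $G$ may fail to be $2$-connected, so the outer face boundary is not necessarily a cycle. My plan is to reduce the general canvas case to the $2$-connected setting, at which point Theorem~\ref{TwoTwos2} applies verbatim.

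I would proceed by induction on $|V(G)|$. If $G$ is disconnected, each component is an independent canvas on its own. A component containing both $v_1,v_2$ is a smaller canvas handled by induction, while a component containing at most one of them can be colored by first fixing a color for the precolored vertex (if any), then picking an outer-boundary neighbor's color distinct from it (possible because outer-boundary vertices not in $S$ have list size at least $3$), and applying Theorem~\ref{ThomPath2} to the resulting precolored edge. Combining reduces to the connected case.

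Now suppose $G$ is connected but not $2$-connected, so $G$ has a cut vertex $u$; write $G=G_1\cup G_2$ with $V(G_1)\cap V(G_2)=\{u\}$. If both $v_1$ and $v_2$ lie in $V(G_1)$, then $(G_1,\{v_1,v_2\},L)$ is a strictly smaller canvas and induction supplies an $L$-coloring of $G_1$ assigning $u$ some color $c$; $G_2$ is then colored by fixing a color in $L(w)\setminus\{c\}$ on an outer-boundary neighbor $w$ of $u$ and invoking Theorem~\ref{ThomPath2}. The delicate case is when $u$ separates $v_1$ from $v_2$. For $i\in\{1,2\}$ let $C_i\subseteq L(u)$ be the set of colors that appear at $u$ in some $L$-coloring of $G_i$. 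I would show $|L(u)\setminus C_i|\le 1$ by contradiction: if two distinct colors $c,c'\in L(u)$ lay outside $C_i$, then the canvas on $G_i$ with $V(S)=\{v_i,u\}$ and $L(u)$ replaced by $\{c,c'\}$ would satisfy the hypotheses of Theorem~\ref{TwoTwos2}, furnishing an $L$-coloring that colors $u$ with $c$ or $c'$, a contradiction. Since $|L(u)|\ge 3$, inclusion-exclusion gives $|C_1\cap C_2|\ge|C_1|+|C_2|-|L(u)|\ge|L(u)|-2\ge 1$, so a common color $c\in C_1\cap C_2$ can be used at $u$, and the resulting colorings of $G_1$ and $G_2$ glue into an $L$-coloring of $G$.

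Finally, when $G$ is $2$-connected, its outer face boundary is a cycle and the canvas hypotheses coincide with those of Theorem~\ref{TwoTwos2}, completing the proof. The main obstacle is the cut-vertex subcase in which $u$ separates $v_1$ from $v_2$; handling it requires the counting argument above, which uses Theorem~\ref{TwoTwos2} twice (once on each side of the cut) to force a compatible choice of color at the cut vertex.
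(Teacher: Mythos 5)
The paper does not actually prove Theorem~\ref{TwoTwos}: it is stated as a ``restatement'' of Theorem~\ref{TwoTwos2} (proved in \cite{PT1}), with the extra generality of the canvas formulation --- namely that $G$ need not be $2$-connected, so the outer face boundary need not be a cycle --- treated as a routine matter, just as the paper flags Theorem~\ref{ThomPath2} as ``slightly more general'' than Theorem~\ref{Thom} without elaboration. Your argument is a correct and reasonable filling-in of exactly that gap, via the standard block/cut-vertex decomposition, and the counting argument for the cut vertex separating $v_1$ from $v_2$ is sound: from $|L(u)\setminus C_i|\le 1$ and $|L(u)|\ge 3$ you do get $C_1\cap C_2\ne\emptyset$, and the two colorings glue along $u$.

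One small imprecision worth fixing: in the separating-cut-vertex subcase, when you invoke the two-list-of-size-two result on the modified canvas $(G_i,\{v_i,u\},L')$, you cannot cite Theorem~\ref{TwoTwos2} directly, since $G_i$ may itself fail to be $2$-connected. You should instead appeal to the inductive hypothesis, i.e.\ Theorem~\ref{TwoTwos} for the strictly smaller canvas $G_i$ (noting $|V(G_i)|<|V(G)|$ because $G_1,G_2\ne G$). The same remark applies in the subcase where both $v_1,v_2$ lie in $G_1$. With that substitution, the induction closes cleanly, with the $2$-connected case (where Theorem~\ref{TwoTwos2} applies verbatim) and the tiny graphs $|V(G)|\le 2$ as the base.

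Since the paper supplies no proof here, there is no approach to compare against; your argument is the natural one the authors evidently had in mind when they wrote ``equivalent but slightly more general.''
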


It should be noted that Thomassen~\cite{ThomWheels} characterized the canvases $(G,S,L)$ where $S$ is a path of length two and $G$ is not $L$-colorable. For our main theorem, we do not need this full characterization. However, we do need the following lemma that can be found in~\cite[Lemma~1]{ThomWheels}.
A {\em chord} of a cycle in a graph $G$ is a subgraph of $G$ consisting of two vertices that belong to the cycle and an edge joining them that does not belong to the cycle.

\begin{lem} \label{WheelUniqueColor}
Let $T=(G,S,L)$ be a canvas such that $S$ is an induced path of length two. If there does not exist a chord of the boundary of the outer face of $G$, then there exists at most one proper $L$-coloring of $S$ that does not extend to an $L$-coloring of $G$.%
\REMA{Deleted definition of essential vertex and essential chord}
\end{lem}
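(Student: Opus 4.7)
The plan is to prove this by induction on $|V(G)|$. In the base case $V(G) = V(S)$, every proper $L$-coloring of $S$ is trivially an $L$-coloring of $G = S$, so no bad colorings exist and the statement holds vacuously.

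For the induction step, suppose $|V(G)| > 3$, and toward a contradiction assume that two distinct proper $L$-colorings $\phi_1 \neq \phi_2$ of $S$ both fail to extend. Write $S = p_1 p_2 p_3$ and let the outer cycle be $C = p_1 p_2 p_3 v_\ell \cdots v_1 p_1$ with $\ell \geq 1$ (since $S$ is induced and $C$ is chordless). I first reduce to the case that $G$ is a near-triangulation by triangulating internal faces, which introduces no outer-face chord and can only shrink the set of extendable colorings of $S$, so any counterexample is preserved.

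The main step is a Thomassen-style list reduction at the outer vertex $v_1$: let $w_1, \ldots, w_m$ be the interior neighbors of $v_1$ in rotational order, choose two colors $a, b \in L(v_1) \setminus \{\phi_1(p_1), \phi_2(p_1)\}$ (possible when $|L(v_1)| \geq 4$, when $\phi_1(p_1) = \phi_2(p_1)$, or when one of the $\phi_j(p_1)$ lies outside $L(v_1)$), set $G' := G - v_1$, $L'(w_i) := L(w_i) \setminus \{a,b\}$, and $L'(v) := L(v)$ otherwise. I would verify that $(G', S, L')$ is a canvas with chordless outer cycle $p_1 p_2 p_3 v_\ell \cdots v_2 w_m \cdots w_1 p_1$, where chordlessness follows because any chord among the new boundary vertices would have had to cross an edge incident to $v_1$ in the plane near-triangulation $G$. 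Moreover, both $\phi_1$ and $\phi_2$ remain bad for $(G', S, L')$: an $L'$-extension of $\phi_j$ to $G'$ together with whichever of $a, b$ differs from the color assigned to $v_2$ would give an $L$-extension of $\phi_j$ to $G$, contradicting badness. The inductive hypothesis applied to $(G', S, L')$ then yields the desired contradiction.

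The hardest case is when $\phi_1(p_1) \neq \phi_2(p_1)$, $|L(v_1)| = 3$, and both $\phi_j(p_1) \in L(v_1)$, so the above reduction is unavailable. The symmetric argument at $v_\ell$ handles the sub-case where $\phi_1(p_3) = \phi_2(p_3)$, so the obstruction can only persist when $\phi_1, \phi_2$ disagree at both $p_1$ and $p_3$ with $|L(v_1)| = |L(v_\ell)| = 3$ and the disagreeing colors lying in $L(v_1)$ and $L(v_\ell)$ respectively. In that rigid situation, either $\ell = 1$ (so $v_1 = v_\ell$ is the unique outer vertex off $S$, and the canvas is small enough to analyze directly via Theorem~\ref{ThomPath2}), or one uses a triangle-contraction or an interior-edge split near $p_1$ or $p_3$—exploiting the near-triangulation structure to identify a triangle $p_1 v_1 w_1$ along which to reduce—to pass to a smaller canvas preserving both bad colorings. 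Carefully combining these sub-reductions with the main Thomassen step closes the induction.
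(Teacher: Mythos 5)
The paper does not actually prove Lemma~\ref{WheelUniqueColor}: it imports it from Thomassen~\cite[Lemma~1]{ThomWheels}, whose argument develops exactly the ``wheel'' structure the lemma's name alludes to. Your sketch adopts the natural high-level strategy (induction together with a Thomassen list-reduction at the boundary vertex $v_1$), but the two steps that carry the actual content of the lemma are not correctly handled, so this is not yet a proof.

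First, chordlessness of the reduced canvas is not established. After deleting $v_1$ the new outer boundary is $p_1 p_2 p_3 v_\ell\cdots v_2 w_m\cdots w_1 p_1$, and you claim any chord of it ``would have had to cross an edge incident to $v_1$.'' That is false: an edge $w_i w_j$ with $|i-j|\ge 2$ (present whenever $v_1 w_i w_j$ is a separating triangle of $G$), or an edge $w_i v_k$ with $k\ge 3$, lies in the plane drawing of $G$ and crosses nothing. Your reasoning only excludes chords with both ends on the original cycle $C$, which are the easy ones, so you cannot invoke the inductive hypothesis on $(G',S,L')$; you would first need to eliminate separating triangles and $4$-cycles (exactly what Claim~\ref{HarmInternal} does via~\cite{Bohme}) and then handle any remaining chord of the reduced boundary by a cut argument. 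Second, the ``hardest case'' (both $\phi_1,\phi_2$ disagree at $p_1$ and at $p_3$, $|L(v_1)|=|L(v_\ell)|=3$, and the disagreeing colors lie in those lists) is described but not resolved: ``triangle-contraction or an interior-edge split \dots\ to pass to a smaller canvas preserving both bad colorings'' is a plan, not an argument, and this rigid configuration is precisely what forces a fan of triangles along the boundary and is where Thomassen's dedicated wheel analysis is needed. Likewise the $\ell=1$ subcase is not ``small enough to analyze directly'': the outer $4$-cycle $p_1p_2p_3v_1$ can bound arbitrarily many internal vertices, and Theorem~\ref{ThomPath2} by itself does not preclude a second bad coloring there.
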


We also need a notion of containment for canvases as follows.

\begin{definition}
A canvas $T=(G,S,L)$ \emph{contains} a canvas $T'=(G',S',L')$ if $G'$ is a subgraph of $G$, $S=S'$ and the restrictions of $L$ and $L'$ to $G'$ are equal.
\end{definition}

\section{Governments and Reductions}

In this section, we will develop notation and definitions for characterizing how the colorings of $P$ in Theorem~\ref{Thom} extend to colorings of other paths of length one on the boundary of the outer walk. We introduce a notion called a government to describe sets of colorings that come in two types which we call dictatorships and democracies. Our main theorem will show that a government extends to at least two governments unless a very specific structure occurs. 

\subsection {Coloring Extensions}

\begin{definition}
Suppose $T=(G,P,L)$ is a canvas where $P=p_1p_2$ is a path of length one in the boundary $C$ of the outer face of $G$. Suppose we are given a collection $\C$ of $L$-colorings of $P$. Let $P'$ be an edge of $G$ with both ends in $C$. We let $\Phi_T(P',\C)$ denote the collection of proper $L$-colorings of $P'$ that can be extended to a proper $L$-coloring $\phi$ of $G$ such that $\phi$ restricted to $P$ is an $L$-coloring in $\C$. We will drop the subscript $T$ when the canvas is clear from context.
\end{definition}

We may now restate Theorem~\ref{Thom} in these terms. 
\begin{thm}\label{Thom2} 
Let $T=(G,P,L)$ be a canvas with $P$ a path of length one. If $\C$ is a non-empty collection of proper $L$-colorings of $P$, 
and $P'$ is an edge of $G$ with both ends in $C$, then $\Phi(P',\C)$ is nonempty.
\end{thm}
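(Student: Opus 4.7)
The plan is to reduce directly to Theorem~\ref{ThomPath2}. The claim really is just a repackaging: the role of $\Phi(P',\C)$ is to record which proper $L$-colorings of the auxiliary edge $P'$ arise from $L$-colorings of $G$ that extend some element of $\C$, and all we need to produce is one such extension.

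More concretely, since $\C$ is non-empty, fix an arbitrary $\phi\in\C$. Define a list assignment $L'$ on $V(G)$ by setting $L'(p_1)=\{\phi(p_1)\}$, $L'(p_2)=\{\phi(p_2)\}$, and $L'(v)=L(v)$ for every $v\in V(G)\setminus V(P)$. I would then check that $(G,P,L')$ is still a canvas: the interior bound $|L'(v)|\ge 5$ and the outer bound $|L'(v)|\ge 3$ for $v\in V(G)\setminus V(P)$ are inherited from $L$, and $\phi$ itself serves as a proper $L'$-coloring of $S=P$ (it is proper because $\phi(p_1)\ne\phi(p_2)$, as $\phi$ was assumed to be a proper coloring of the edge $P$). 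Applying Theorem~\ref{ThomPath2} to $(G,P,L')$ yields an $L'$-coloring $\psi$ of $G$. Because $L'(v)\subseteq L(v)$ for every vertex, $\psi$ is also an $L$-coloring of $G$, and by construction $\psi|_P=\phi\in\C$. Consequently $\psi|_{P'}\in\Phi(P',\C)$, so $\Phi(P',\C)\ne\emptyset$.

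There is no serious obstacle: the entire content of the statement lies in the existence of an $L$-coloring of $G$ extending a prescribed proper coloring of a length-one path $P$, which is exactly what the canvas form of Thomassen's theorem (Theorem~\ref{ThomPath2}) provides once one shrinks the lists on $V(P)$ to singletons. The only minor point worth being explicit about is that the shrunken pair $(G,P,L')$ indeed still satisfies the canvas axioms, which is immediate since those axioms impose no lower bound on $|L(v)|$ for $v\in V(S)$ and only require a proper $L'$-coloring of $S$ to exist.
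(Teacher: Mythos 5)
Your proof is correct. The paper gives no explicit proof of Theorem~\ref{Thom2}, treating it as an immediate restatement of Theorem~\ref{ThomPath2}, and your argument --- shrinking the lists on $V(P)$ to the singletons $\{\phi(p_1)\}$, $\{\phi(p_2)\}$, verifying the canvas axioms still hold, and invoking Theorem~\ref{ThomPath2} to produce an extension whose restriction to $P$ is forced to equal $\phi\in\C$ --- is exactly the formalization the paper leaves implicit.
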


Note the following easy proposition.
\begin{prop} \label{PhiChord}
Let $T,P,P'$ be as in Theorem~\ref{Thom2}. If $U=u_1u_2$ is a chord of $C$ separating $P$ from $P'$, then 
$$\Phi(P', \Phi(U,\C) )=\Phi(P',\C).$$
\end{prop}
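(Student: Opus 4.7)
The plan is to exploit the fact that the chord $U=u_1u_2$ separates $G$ into two plane subgraphs $G_1$ and $G_2$ with $G_1\cap G_2=U$ and $G_1\cup G_2=G$, where $P$ lies in $G_1$ and $P'$ lies in $G_2$. The key structural consequence of $U$ being a separating chord is that no edge of $G$ has one end in $V(G_1)-\{u_1,u_2\}$ and the other in $V(G_2)-\{u_1,u_2\}$. Therefore, any two $L$-colorings of $G$ that agree on $\{u_1,u_2\}$ can be spliced along $U$: using the first coloring on $V(G_1)$ and the second on $V(G_2)$ produces a proper $L$-coloring of $G$.

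For the easy inclusion $\Phi(P',\C)\subseteq\Phi(P',\Phi(U,\C))$, I would take $\psi\in\Phi(P',\C)$ with a witness $\phi$, i.e., an $L$-coloring of $G$ such that $\phi|_{P'}=\psi$ and $\phi|_P\in\C$. Setting $\phi_U:=\phi|_U$, the same $\phi$ witnesses $\phi_U\in\Phi(U,\C)$, and then $\phi$ also witnesses $\psi\in\Phi(P',\Phi(U,\C))$. This direction uses nothing beyond unwinding the definitions.

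For the reverse inclusion, take $\psi\in\Phi(P',\Phi(U,\C))$ together with a witnessing $L$-coloring $\phi$ of $G$ satisfying $\phi|_{P'}=\psi$ and $\phi|_U=:\phi_U\in\Phi(U,\C)$. Since $\phi_U\in\Phi(U,\C)$, there exists another $L$-coloring $\phi'$ of $G$ with $\phi'|_U=\phi_U$ and $\phi'|_P\in\C$. Define $\phi''$ by $\phi''(v):=\phi'(v)$ for $v\in V(G_1)$ and $\phi''(v):=\phi(v)$ for $v\in V(G_2)$; this is well-defined on $\{u_1,u_2\}$ because both colorings agree there, and by the separation property above it is a proper $L$-coloring of $G$. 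Since $\phi''|_{P'}=\phi|_{P'}=\psi$ and $\phi''|_P=\phi'|_P\in\C$, we conclude $\psi\in\Phi(P',\C)$.

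The only step requiring any thought is the splicing construction used for the reverse inclusion; the rest is bookkeeping directly from the definition of $\Phi$. The one mild care required is notational: $\Phi(P',\cdot)$ was defined with its second argument a collection of colorings of $P$, but here it is applied to a collection of colorings of $U$. This is harmless provided one reads $\Phi(P',\C')$ as the set of $L$-colorings of $P'$ extending to an $L$-coloring of $G$ whose restriction to the domain of $\C'$ lies in $\C'$.
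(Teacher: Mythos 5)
Your proof is correct and is exactly the argument one expects here; the paper itself offers none, labeling the proposition ``easy,'' so the intended proof is presumably the splicing argument you give. The easy inclusion unwinds the definitions, and the reverse inclusion correctly uses that a chord of the outer cycle separates $G$ into $G_1$ and $G_2$ meeting only in $U$, so that two $L$-colorings agreeing on $U$ can be combined into a proper $L$-coloring of $G$.

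Two small remarks. First, your closing caveat about the overloaded second argument of $\Phi$ is well taken and is the only real subtlety in the statement: $\Phi(P',\cdot)$ is defined for collections of colorings of $P$, so the composed expression must be read as requiring the restriction to $U$ (rather than to $P$) to lie in $\Phi(U,\C)$, which is precisely how you read it. One could alternatively interpret the outer $\Phi$ relative to the sub-canvas $(G_2,U,L)$; that yields the same set, since by Theorem~\ref{ThomPath2} any $L$-coloring of $G_2$ whose restriction to $U$ lies in $\Phi(U,\C)$ extends, via the witness coloring on $G_1$, to all of $G$ --- the same splicing step you already carry out. Second, the statement ``no edge has one end in $V(G_1)-\{u_1,u_2\}$ and the other in $V(G_2)-\{u_1,u_2\}$'' is where planarity (and the fact that $U$ is a chord of the outer cycle $C$) is being used; it is worth noting that this is a Jordan-curve consequence rather than a purely combinatorial one, but it is exactly the sense in which $U$ ``separates'' $P$ from $P'$.
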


\subsection{Governments}

To explain the structure of extending larger sets of colorings, we focus on two special sets of colorings, defined as follows.

\begin{definition}[Government]

Let $\C=\{\phi_1, \phi_2, \ldots, \phi_k\}$, $k\ge 2$, be a collection of disjoint proper colorings of a path $P=p_1p_2$ of length one. For $p\in P$, let $\C(p)$ denote the set $\{\phi(p)|\phi\in \C\}$.

We say $\C$ is a \emph{dictatorship} if there exists $i\in \{1,2\}$ such that $\phi_j(p_i)$ is the same for all $1\le j\le k$, in which case, we say $p_i$ is the dictator of $\C$. We say $\C$ is a \emph{democracy} if $k=2$ and $\phi_1(p_1)=\phi_2(p_2)$ and $\phi_2(p_1)=\phi_1(p_2)$. We say $\C$ is a \emph{government} if $\C$ is a dictatorship or a democracy.
\end{definition}

We also need a generalized form of government as follows.

\begin{definition}
Let $\C$ be a collection of disjoint proper colorings of a path $P=p_1p_2$ of length one. We say $\C$ is a \emph{confederacy} if $\C$ is not a government and yet $\C$ is the union of two governments. 
\end{definition}

\subsection{Reductions}

Thomassen found a useful reduction in his proof of $5$-choosability. We will need a generalization of that reduction as follows.

\begin{definition}[Democratic Reduction]
Let $T=(G,S,L)$ be a canvas and $L_0$ be a set of two colors. Let $C$ be the boundary of the outer face of $G$. Suppose that $P=p_1\ldots p_k$ is an induced path in $C$ such that, for every vertex $v$ in $P$, $v$ is not the end of a chord of $C$ or a cutvertex of $C$,
\aa{$V(C)\ne V(P)$,} and $L_0\subseteq L(v)$. If $k\ge 2$, let $x$ be the vertex of $C$ adjacent to $p_1$ other than $p_2$ and $y$ be the vertex of $C$ adjacent to $p_k$ other than $p_{k-1}$. If $k=1$, let $x$ and $y$ be the two neighbors of $p_1$ on $C$. We assume that $L(x)- L_0\ne \emptyset$. 

We define the \emph{democratic reduction} of $P$ in $T$ with respect to $L_0$ and centered at $x$, denoted as $T(P,L_0,x)$, as $(G\setminus V(P),S',L')$ where $L'(w)=L(w)- L_0$ if either $w=x$, or $w\ne y$ is a neighbor of a vertex in $P$, and $L'(w)=L(w)$ otherwise; and $S'=S\setminus V(P)$ if $|L'(x)|\ge 3$ and otherwise let $S'$ be obtained from $S\setminus V(P)$ by adding $x$ as an isolated vertex.
\end{definition}

\REM{Note: CHECK Later if want edges from $x$ to $S\setminus V(P)$. CHECK Later if we ever use $V(P)=V(C)$.}

\begin{prop}\label{DemRed}
Let $T(P,L_0,x)=(G',S',L')$ be a democratic reduction of a path $P$ in a canvas $T=(G,S,L)$ with respect to $L_0$ and centered at $x$. The following statements hold:

\begin{enumerate}
\item $T(P,L_0, x)$ is a canvas.
\item If $\phi$ is an $L'$-coloring of $G'$, then $\phi$ can be extended to an $L$-coloring of $G$.
\end{enumerate}
\end{prop}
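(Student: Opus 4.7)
The plan is to handle the two items in turn, with item (1) being routine bookkeeping against the canvas axioms and item (2) containing the key combinatorial idea.

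For item (1), I would verify each canvas axiom for $T(P,L_0,x) = (G',S',L')$. Planarity of $G'$ is inherited from $G$. The hypothesis that no vertex of $P$ is a cutvertex of $C$ or the end of a chord of $C$ guarantees that the boundary of the outer face of $G'$ consists of the vertices of $V(C) - V(P)$ together with the interior-of-$G$ vertices that are adjacent to $V(P)$, so $S'$ is visibly a subgraph of that boundary. For the list-size conditions, an interior vertex of $G'$ must be an interior vertex of $G$ that is not adjacent to $V(P)$, so its list is unchanged and has size at least $5$; a non-$S'$ boundary vertex of $G'$ either kept its list and has size $\ge 3$, or lost $L_0$ because it was an interior-of-$G$ neighbor of $V(P)$ (size $\ge 5-2=3$), or is $y$ (list unchanged), or is $x$ in the regime $|L'(x)| \ge 3$. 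To produce an $L'$-coloring of $S'$, I would start from the $L$-coloring of $S$ provided by the canvas $T$, restrict it to $V(S') - \{x\}$ (where all lists are unchanged), and, if needed, recolor $x$ from $L'(x) = L(x) - L_0$: when $|L'(x)| \ge 3$ this is possible because $x$ has at most two neighbors in $S \subseteq C$, and when $|L'(x)| < 3$ the vertex $x$ is isolated in $S'$, so any element of the nonempty set $L'(x)$ works.

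For item (2), the key observation is that, by construction, every neighbor of $V(P)$ in $V(G')$ other than $y$ has its $L'$-color outside $L_0$; in particular $\phi(x) \in L(x) - L_0$. So given an $L'$-coloring $\phi$ of $G'$, extending $\phi$ to $V(P)$ reduces to two-coloring the induced path $p_1p_2 \ldots p_k$ using the two colors of $L_0$, subject only to avoiding the color $\phi(y)$ at $p_k$. Since $|L_0|=2$ there are exactly two proper colorings of this path with colors from $L_0$, obtained from each other by swapping the two colors. If $\phi(y) \in L_0$, then one of the two is forced (propagating by alternation from $p_k$); if $\phi(y) \notin L_0$, either one works. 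Properness at $p_1$ against $x$ is automatic because $\phi(x) \notin L_0$, and properness between $V(P)$ and every interior-of-$G$ neighbor of $V(P)$ holds for the same reason.

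The main technical subtlety lies in item (1), in the verification that $S'$ admits an $L'$-coloring when $x$ already belongs to $S$ and the original $L$-coloring of $S$ happens to assign $x$ a color in $L_0$; the resolution rests on the fact that $x$ lies on the cycle $C$, so it has at most two neighbors in $S$, together with the case split in the definition of $S'$. Once this point is settled, the remainder of the argument is a direct verification against the canvas axioms and the short alternation argument for the path $P$.
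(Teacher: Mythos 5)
Your proposal is correct and follows essentially the same strategy as the paper's proof: part~(1) is a direct check of the canvas axioms by tracking which vertices lose list entries and where they land relative to the boundary and $S'$, and part~(2) is the alternation argument on $P$ starting from $p_k$ with $\phi(p_k)\in L_0-\{\phi(y)\}$, using the fact that every $G'$-neighbor of $V(P)$ other than $y$ (in particular $x$) has its $L'$-list disjoint from $L_0$. The one place you go beyond the paper is in explicitly verifying that $S'$ admits a proper $L'$-coloring --- the paper's proof of part~(1) silently confines itself to the list-size and boundary conditions --- and your treatment of that point (using the nonemptiness of $L(x)-L_0$ when $|L'(x)|<3$, and recoloring $x$ otherwise) is a reasonable way to fill it in.
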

\begin{proof}
Let $C$ be the boundary of the outer face of $G$. If $v\in V(G')$ such that $|L'(v)|<5$, then either $v\in C$ or $v$ is adjacent to a vertex of $w$ in $P$. In either case, $v$ is in the boundary of the outer face of $G'$. Note that if $v\in V(G')$ such that $L'(v)\ne L(v)$, then either $v=x$ or $v\not\in C$. In the latter case, $|L(v)|=5$ and hence $|L'(v)|\ge 3$. Thus, if $v\in V(G')$ such that $|L'(v)|<3$, then $v\in V(S)\cup \{x\}$. Recall that by definition, $V(S')=V(S)\cup \{x\}$ if $|L'(x)|<3$ and $V(S')=V(S)$ otherwise. In either case, it follows that if $v\in V(G')$ such that $|L'(v)| < 3$, then $v\in V(S')$. This proves (1).

Let $\phi$ be an $L'$-coloring of $G'$. Let $P=p_1\ldots p_k$ where $p_1$ is adjacent to $x$. If $k=1$, let $y$ be the neighbor of $p_1$ in $C$ other than $x$. If $k\ge 2$, let $y$ be the neighbor of $p_k$ in $C$ other than $p_{k-1}$. Let $\phi(p_k)\in L_0- \{\phi(y)\}$. For all $i$ with $1\le i\le k-1$, let $\phi(p_i)\in L_0-\{\phi(p_{i+1})\}$. Now $\phi$ is an $L$-coloring of $G$. This proves (2).
%
\qed
\end{proof}

We note that Thomassen's reduction corresponds to a democratic reduction where $|V(P)|=1$, $x\in V(S)$ and $|L(x)|=1$.

\section{Harmonicas}\label{sec:harmonicas}

In this section, we rework the definition of coloring harmonica to an object involving governments which we call a harmonica. We then prove a stronger version of our main theorem which shows that harmonicas are the only obstacle to extending a government to a confederacy. We will then show that this implies that coloring harmonicas are the only obstruction to generalizing Theorem~\ref{TwoTwos} to the case of one vertex with a list of size one and one with a list of size two. That is, we finally prove Theorem~\ref{thm:harmon}.

\begin{definition}[Harmonica]
Let $T=(G,P,L)$ be a canvas where $P$ is path of length one. Let $\C$ be a government for $P$ and let $P'$ be another
(not necessarily distinct) path of length one incident with the outer face of $G$. We say $T$ is a \emph{harmonica} from $P$ to $P'$ with government $\C$ if 

\begin{itemize}
\item $G=P=P'$, or
\item $\C$ is a dictatorship, $G=P\cup P'$, $V(P)\cap V(P') = \{z\}$ where $z$ is the dictator of $\C$, or
\item $\C$ is a dictatorship and there exists a triangle $zu_1u_2$ where $z\in V(P)$ is the dictator of $\C$ in color $c$,
\aa{for $i=1,2$ we have $L(u_i)=L_0\cup\{c\}$ if $u_i\not\in V(P)$ and $\C(u_i)=L_0
$ otherwise,}
where $|L_0|=2$ and the canvas $(G\setminus (V(P)- V(U)), U,L)$ is a harmonica from $U=u_1u_2$ to $P'$ with democracy $\C'$ where $\C'(u_1)=\C'(u_2)=L_0$, or
\item $\C$ is a democracy and there exists $z\sim p_1,p_2$ where $P=p_1p_2$ such that $L(z)=L_0\cup \{c\}$ where $L_0=\C(p_1)=\C(p_2)
$ and there exists $i\in\{1,2\}$ such that the canvas $(G\setminus p_i, U, L)$ is a harmonica with dictatorship $\C'=\{\phi_1,\phi_2\}$, where $U=zp_{3-i}$ and $\phi_1(z)=\phi_2(z)=c$ and $\{\phi_1(p_{3-i}),\phi_2(p_{3-i})\}=L_0$.
\end{itemize}

Note that $\Phi_T(P',\C)$ is a government.%
\REM{Deleted: We say a harmonica is \emph{even} if $\C$ and $\C'$ are both dictatorships or both democracies and \emph{odd} otherwise.}
We remark that the notion of harmonica is closely related to the notion of coloring harmonica, introduced earlier.
Lemma~\ref{lem:harmonicas} clarifies the relation between the two.
\end{definition}

\xx{%
We need the following easy lemma, whose proof we omit.}

\xx{%
\begin{lem} 
\label{L=3}
Let $T=(G,P,L)$ be a harmonica from $P$ to $P'$ with government $\C$, and let $v\in V(G)-V(P)$ be such that 
if $v\in V(P')$, then $v$ has degree at least two. Then $|L(v)|=3$.
\end{lem}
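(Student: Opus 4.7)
My plan is to prove the lemma by induction on $|V(G)|$, following the four cases of the recursive definition of a harmonica. The two base cases correspond to the first two bullets. When $G=P=P'$, the set $V(G)-V(P)$ is empty, so there is nothing to check. When $G=P\cup P'$ and $V(P)\cap V(P')=\{z\}$, the only vertex of $V(G)-V(P)$ is the endpoint $w$ of $P'$ distinct from $z$; but $w$ has degree one in $G$, and since $w\in V(P')$ the hypothesis $\deg_G(w)\ge 2$ fails, so again there is nothing to verify.

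For the two inductive cases I will apply the inductive hypothesis to the inner harmonica supplied by the definition. In the dictatorship-plus-triangle case, set $G_1:=G\setminus (V(P)-V(U))$, so that $(G_1,U,L)$ is a strictly smaller harmonica from $U=u_1u_2$ to $P'$. Take any $v\in V(G)-V(P)$ satisfying the hypothesis. Either $v\in\{u_1,u_2\}$, in which case the clause $L(u_i)=L_0\cup\{c\}$ (which applies precisely when $u_i\notin V(P)$) gives $|L(v)|=3$; or $v\in V(G_1)-V(U)$, and induction applied to $(G_1,U,L)$ yields $|L(v)|=3$. The democracy case is analogous: the newly introduced vertex $z$ satisfies $L(z)=L_0\cup\{c\}$, so $|L(z)|=3$; every other $v\in V(G)-V(P)$ lies in the inner graph $G\setminus p_i$ outside $U=zp_{3-i}$, and induction applies.

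The one technical point to confirm is that the degree condition transfers cleanly to the inner harmonica when invoking induction. In the triangle case the vertices removed are in $V(P)-V(U)$, and in the democracy case the removed vertex is the endpoint $p_i$ of $P$; in both reductions the discarded vertices sit in a small neighborhood of $P$, so any $v\in V(P')$ retains all its neighbors outside $V(P)$, and in particular $\deg_{G_1}(v)\ge 2$ whenever $\deg_G(v)\ge 2$, apart from the degenerate situations already covered by the base cases. I expect this bookkeeping step to be the only place requiring care; once it is settled, the rest is a direct unpacking of the four bullets in the definition.
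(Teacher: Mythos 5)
Your argument is correct; it is the natural induction on the recursive definition of a harmonica, and the paper itself gives no proof, noting only that the lemma is ``easy.'' The one point you flag for yourself --- that the degree hypothesis transfers to the inner harmonica --- does hold: the deleted vertices ($z$ and possibly $p_2$ in the third rule, $p_i$ in the fourth) are adjacent in $G$ only to vertices of $V(P)\cup\{u_1,u_2\}$ (respectively $V(P)\cup\{z\}$), so every $v$ in the inner harmonica outside $V(U)$ keeps the same degree in $G_1$ as in $G$; the vertices of $V(U)-V(P)$ that do not survive to the induction are handled directly by the list clause $L(u_i)=L_0\cup\{c\}$, which has size exactly three since $c\notin L_0$ (the dictator's color must be absent from the democracy colors used on its two neighbors, and also $|L(u_i)|\ge 3$ for $u_i\notin V(P)$ by the canvas condition). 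Spelling out this neighborhood observation would close the only gap you left open, but the approach and case analysis are sound.
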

}

The following is our main result.

\begin{thm}\label{Harmonica2}
Let $T=(G,P,L)$ be a canvas and $P,P'$ be paths of length one in the boundary of the outer face of $G$. Given a collection $\C$ of proper colorings of $P$ such that $\C$ is a government or a confederacy, then $\Phi(P',\C)$ contains a government, and furthermore, either 

\begin{itemize}
\item $\Phi(P',\C)$ contains a confederacy, or, 
\item $\C$ is a government and $T$ contains a harmonica from $P$ to $P'$ with government $\C$.
\end{itemize}
\end{thm}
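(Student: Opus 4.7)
The plan is to argue by induction on $|V(G)|+|E(G)|$, with the inductive statement being the full theorem (both the government and confederacy hypotheses handled together). The base case is $G=P$: then necessarily $P'=P$ and $\Phi(P',\C)=\C$, so if $\C$ is a government the first bullet of the harmonica definition ($G=P=P'$) applies, while if $\C$ is a confederacy then $\C$ already contains a confederacy inside $\Phi(P',\C)$.

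I would treat the confederacy hypothesis first, as it is the easier of the two. Writing $\C=\C_1\cup\C_2$ with each $\C_i$ a government, apply the inductive hypothesis to each sub-canvas $(G,P,L)$ with collection $\C_i$ (the graph is unchanged, but we use that the hypothesis gives a stronger conclusion for governments). We obtain governments $\D_i\subseteq\Phi(P',\C_i)\subseteq\Phi(P',\C)$; if either application already produces a confederacy we are done. Otherwise, because $\C$ strictly properly contains both $\C_i$ (being a confederacy, hence not a government), there exists a coloring $\psi\in\C$ not in $\C_1\cap\C_2$ that witnesses a third extension to $P'$ distinct from $\D_1\cup\D_2$; combining this with $\D_1$ and $\D_2$ and using that a union of two governments is either itself a government or a confederacy, one checks that $\D_1\cup\D_2$ plus the new coloring contains a confederacy.

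The substantive case is when $\C$ is a government. I would first look for reductions that strictly decrease $|V(G)|+|E(G)|$: removing a non-$P,P'$ edge whose removal preserves all relevant extensions; applying Proposition~\ref{PhiChord} with a chord $U$ of the outer cycle separating $P$ from $P'$ (inducting once from $P$ to $U$ and once from $U$ to $P'$, then composing the resulting governments or confederacies); and performing Thomassen-style democratic reductions at outer-face vertices $v\notin V(P)\cup V(P')$ with $|L(v)|\geq 3$, using Lemma~\ref{L=3} and Proposition~\ref{DemRed} to control the list sizes. In each case, a confederacy in the reduced canvas pulls back to a confederacy in $T$, so we may assume none of these reductions produce more than a government.

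The hard part is showing that the complete absence of confederacy-producing reductions forces $T$ to be a harmonica with government $\C$. The strategy is to examine the faces incident with $P$: any ``extra freedom'' at $p_1$ or $p_2$ (a neighbor with a rich list, a triangle with mismatched lists, an absent chord) would give two independent extensions and thus a confederacy on some intermediate edge, which would propagate to $P'$ by induction. Ruling these out pins down exactly the structure of the second, third, and fourth bullets of the harmonica definition: a dictator $z$ sitting on a triangle $zu_1u_2$ with $L(u_1)=L(u_2)=L_0\cup\{c\}$, or a vertex $z$ adjacent to both $p_1,p_2$ with $L(z)=L_0\cup\{c\}$, and in each case a smaller canvas to which induction applies. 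The main technical obstacle is ensuring that the list-equality conditions, together with the alternation between dictatorships and democracies, propagate correctly through the recursion, so that the smaller harmonica guaranteed by induction glues with the just-identified boundary triangle to make $T$ itself a harmonica from $P$ to $P'$.
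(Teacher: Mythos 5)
Your high-level plan mirrors the paper's (minimum counterexample / induction, structural claims about connectivity and chords, democratic reductions), but there are two concrete gaps in what you have written.

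First, the confederacy case. You propose to write $\C=\C_1\cup\C_2$, apply induction to each $\C_i$ on the \emph{same} graph, and then, if neither produces a confederacy, argue that some $\psi\in\C\setminus(\C_1\cap\C_2)$ ``witnesses a third extension to $P'$ distinct from $\D_1\cup\D_2$.'' This is not justified: nothing forces extensions of $\psi$ to restrict to $P'$ outside $\D_1\cup\D_2$; in principle $\D_1=\D_2$ could be a single government absorbing all of $\Phi(P',\C)$. Also, your inductive measure $|V(G)|+|E(G)|$ does not decrease when you replace $\C$ by $\C_i$ on the same graph, so you cannot invoke the inductive hypothesis at all without an additional tiebreak (the paper's is: among counterexamples with $|V(G)|$ minimum, prefer $\C$ a government). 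The paper handles the confederacy case by a genuinely different argument: \emph{after} first establishing that $G$ has no chord of the outer cycle, that $P\ne P'$, that $V(P)\cup V(P')$ is not a triangle, and that $V(P)\cap V(P')=\emptyset$, it applies minimality to $\C_1$ to obtain a harmonica from $P$ to $P'$ with government $\C_1$, and then observes that under the no-chord/no-triangle constraints this harmonica would have to be $P\cup P'$, contradicting $V(P)\cap V(P')=\emptyset$. So the structural claims must come \emph{before} the reduction to $\C$ a government, not after.

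Second, and more substantively, the crux of the proof --- showing that a minimal counterexample with $\C$ a dictatorship must contain a harmonica --- is only gestured at. ``Examine the faces incident with $P$; any extra freedom gives a confederacy'' is a reasonable intuition but is not an argument. The paper's actual work is a fairly delicate sequence of steps: it first reduces to $\C$ a dictatorship with dictator $p_1$ by a democratic-reduction argument along a maximal $L_0$-rich path in $C$ (using Claim~\ref{NoIntersect} and that $G$ is $2$-connected); it then sets up two democratic reductions $T_1=T'(v_1,M_1,p_1)$ and $T_2=T'(v_2,M_2,p_1)$ about the two neighbors of $p_1$, shows $v_1,v_2\notin V(P')$, applies minimality to both reduced canvases to get two harmonicas $T_1'',T_2''$, and then proves (Claims~\ref{v1v2notinP'} and the following two) the existence of a single interior vertex $v$ adjacent to $p_1,v_1,v_2,p_1',p_2'$ and of the fan sequences $z_1,\dots,z_k$ and $w_1,\dots,w_l$ around $v$, from which $M_1$ and $M_2$ are shown disjoint, $p_1'$ and $p_2'$ are shown to be dictators of $\C_2$ and $\C_1$ respectively, and a confederacy in $\Phi_T(P',\C)$ is finally produced. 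None of this structure is present in your sketch, and I don't see how to get to the conclusion without it --- this is exactly where the paper spends most of its effort.
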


\begin{proof}
Suppose that $T=(G,P,L)$ is a counterexample with $|V(G)|$ minimized and, subject to that, $\C$ is a government if possible. Let $C$ be the boundary of the outer face of $G$.

\begin{claim}
$G$ is connected.
\end{claim}
\begin{proof}
Suppose not. Let $G_1$ be the component of $G$ containing $P$. First suppose that $G_1$ contains $P'$ and let $G_2$ be a component of $G$ other than $G_1$. Let $T'=(G\setminus V(G_2),P,L)$. By the minimality of $G$, $\Phi_{T'}(P',\C)$ contains a government and hence $\Phi_T(P',\C)$ contains a government \aa{by Theorem~\ref{Thom}}. Furthermore, either $\Phi_{T'}(P',\C)$ contains a confederacy, a contradiction as then $\Phi_T(P',\C)$ contains a confederacy  \aa{by Theorem~\ref{Thom}}, or $T'$ contains a harmonica  from $P$ to $P'$ with government $\C$, \aa{in which case so does $T$,} a contradiction. 

So we may assume that $G_1$ does not contain $P'$ and let $G_2$ be the component of $G$ containing $P'$. It follows from Theorem~\ref{ThomPath2} that every $L$-coloring of $P\cup P'$ extends to an $L$-coloring of $G$. In particular, let $P'=p_1'p_2'$ and $c_1\in L(p_1'), c_2\in L(p_2')$. If we let $\C_1=\{\phi_1,\phi_2\}$ where $\phi_1(p_1')=\phi_2(p_1')=c_1$ and $\{\phi_1(p_2'),\phi_2(p_2')\}$ be a subset of $L(p_2')- \{c_1\}$ of size two. Similarly let $\C_2=\{\psi_1,\psi_2\}$ where $\psi_1(p_2')=\psi_2(p_2')=c_2$ and $\{\psi_1(p_1'),\psi_2(p_1')\}$ be a subset of $L(p_1')- \{c_2\}$ of size two. Thus $\C_1$ is a dictatorship with dictator $p_1'$ and $\C_2$ is a dictatorship with dictator $p_2'$. Hence $\C_1\cup \C_2$ is a confederacy and $\C_1\cup \C_2\subseteq \Phi(\P',\C)$, a contradiction.
\qed\end{proof}

\begin{claim}\label{HarmInternal}
There does not exist a vertex in an open disk bounded by a cycle of length at most four.
\end{claim}
\begin{proof}
Let $C$ be a cycle of length at most four in $G$. Let $\Delta$ be the closed disk bounded by $C$. Let $G_1=G\setminus (\Delta \setminus C)$ and $G_2=G\cap \Delta$. Suppose $G\cap (\Delta \setminus C)\ne\emptyset$. Let $\phi$ be an $L$-coloring $\phi$ of $G_1$. It follows from a theorem of Bohme at al~\cite{Bohme} that $\phi$ can be extended to an $L$-coloring of $G_2$ and hence to an $L$-coloring of $G$. Let $T_1=(G_1,P,L)$. From above, $\Phi_{T_1}(P',\C)\subseteq \Phi_T(P',\C)$. Since $P,P'\subseteq G_1$, it follows from the minimality of $T$ that $\Phi_{T_1}(P',\C)$ contains a government $\C'$. Furthermore, either $\Phi_{T_1}(P',\C)$ contains a confederacy, a contradiction, or, $T_1$ contains a harmonica $T'$ from $P$ to $P'$ with government $\C$, and hence so does $T$, a contradiction.
\qed\end{proof}

\begin{claim}\label{Harm2Conn}
$G$ is $2$-connected.
\end{claim}
\begin{proof}
Suppose not. Then there exists a cutvertex $v$ of $G$.
So suppose $v$ divides $G$ into two graphs $G_1,G_2\ne G$ such that $G_1\cup G_2=G$, $V(G_1)\cap V(G_2)=\{v\}$
and without loss of generality $V(P)\subseteq V(G_1)$. Consider the canvases $T_1=(G_1,P,L)$ and $T_2=(G_2,U',L)$  where $U'=vw$ is an edge of the outer walk of $G_2$ containing $v$. If $V(P')\subseteq V(G_1)$, then by the minimality of $T$, 
\aa{$\Phi_{T_1}(P',\C)$ contains a government, and} hence so does $\Phi_{T}(P',\C)$, and  
either $\Phi_{T_1}(P',\C)$ contains a confederacy, or $T_1$ contains a harmonica $T'$ from $P$ to $P'$ with government $\C$. In the former case, it follows from Theorem~\ref{ThomPath2} that every $L$-coloring of $G_1$ extends to an $L$-coloring of $G$ and hence $\Phi_{T_1}(P',\C)$ contains a confederacy, a contradiction. In the latter case, $T$ also contains $T'$, a contradiction. So we may assume that $V(P')\subseteq V(G_2)$.

Now suppose there exist two $L$-colorings $\phi_1,\phi_2$ of $T_1$ such that  $\phi_1(v)\ne\phi_2(v)$. Then there exists a confederacy $\C'$ for $U'$ (a union of two dictatorships) such that every coloring in $\C'$ extends back to $T_1$. As $T$ is a minimum counterexample, it follows that $\Phi_{T_2}(P',\C')$ has a confederacy. Hence $\Phi_T(P',\C)$ has a confederacy, contradicting that $T$ is a counterexample.

Let $U$ be an edge of the outer walk of $G_1$ containing $v$. Hence, by the previous paragraph  we may assume that $\Phi_{T_1}(U,\C)$ is a dictatorship with dictator $v$. Let $c$ be the color of $v$ in that dictatorship. As $T$ is a minimum counterexample, it follows that $T_1$ contains a harmonica $T_1'=(G_1',P,L)$ from $P$ to $U$. 
\REM{Deleted: Moreover, $T_1'$ is an even harmonica if $\C$ is a dictatorship and odd if $\C$ is a democracy.}%
Let $\C_2=\{\psi_1,\psi_2\}$ where $\psi_1(v)=\psi_2(v)=c$ and $\{\psi_1(w),\psi_2(w)\}$ is a subset of $L(w)- \{c\}$ of size two. Note that $\C_2$ is a dictatorship with dictator $v$. It follows from the minimality of $T$ that $\Phi_{T_2}(P',\C_2)$ contains a government and hence $\Phi_T(P',\C)$ contains a government. Furthermore, either $\Phi_{T_2}(P',\C_2)$ contains a confederacy, a contradiction as then $\Phi_T(P',\C)$ contains a confederacy, or that $T_2$ contains a harmonica $T_2'=(G_2',U,L)$ from $U'$ to $P'$. Let $G'$ be the union of $G_1'$ and $G_2'$ where we delete vertices of $U\setminus V(P)$ that have degree one in $G_1'$ and vertices of $U'\setminus V(P')$ that have degree one in $G_2'$. Then $T'=(G',P,L)$ is a harmonica from $P$ to $P'$ with government $\C$, a contradiction.
\qed\end{proof}

\begin{claim}\label{HarmNoChord}
There does not exist a chord of $C$.
\end{claim}
\begin{proof}
Suppose there exists a chord $U$ of $C$. Now $U$ divides $G$ into graphs $G_1,G_2\ne G$ such that $G_1\cup G_2=G$ and $G_1\cap G_2=U$, where we may assume without loss of generality that $P\subseteq G_1$. Consider the canvases $T_1=(G_1,P,L)$ and $T_2=(G_2,U,L)$. If $V(P')\subseteq V(G_1)$, then by the minimality of $T$, 
\aa{$\Phi_{T_1}(\P',\C)$ contains a government, and} hence so does $\Phi_{T}(\P',\C)$, and
either $\Phi_{T_1}(\P',\C)$ contains a confederacy, or $T_1$ contains a harmonica $T'$ from $P$ to $P'$ with government $\C$. In the former case, it follows from Theorem~\ref{ThomPath2} that every $L$-coloring of $G_1$ extends to an $L$-coloring of $G$ and hence $\Phi_{T}(P',\C)$ contains a confederacy, a contradiction. In the latter case, $T$ also contains $T'$, a contradiction. 

So we may assume that $V(P')\subseteq V(G_2)$.  By the minimality of $T$, $\Phi_{T_1}(U,\C)$ contains a government $\C'$. Furthermore, either $\Phi_{T_1}(U,\C)$ contains a confederacy $\C''$, or, there exists a harmonica $T_1'=(G_1',P,L)$ from $P$ to $U$ with government $\C$. Suppose the former. But then by the minimality of $T$, $\Phi_{T_2}(P',\C'')$ contains a confederacy and hence $\Phi_T(P',\C)$ contains a confederacy by Proposition~\ref{PhiChord}, a contradiction.

So we may suppose the latter.  By the minimality of $T$, $\Phi_{T_2}(P',\C')$ contains a government and hence $\Phi_T(P',\C)$ contains a government. Furthermore, either $\Phi_{T_2}(P',\C')$ contains a confederacy or there exists a harmonica $T_2'=(G_2',U,L)$ from $U$ to $P'$ with government $\C'$. If the former holds, then $\Phi_T(P',\C)$ contains a confederacy by Propoisition~\ref{PhiChord}, a contradiction. So suppose the latter. Let $G'$ be obtained from $G_1'$ and $G_2'$ by deleting vertices of $U\setminus (V(P)\cup V(P'))$ that have degree one in both $G_1$ and $G_2$. Then $T'=(G',P,L)$ is a harmonica from $P$ to $P'$ with government $\C$, a contradiction. 
\qed\end{proof}

\begin{claim}\label{NotEqual}
$P\ne P'$.
\end{claim}
\begin{proof}
Suppose not. Note that every $L$-coloring of $P$ extends to an $L$-coloring of $G$ by Theorem~\ref{ThomPath2} and hence $\Phi(P',C)$ contains $\C$. Thus if $\C$ is a confederacy, $\Phi_T(P',\C)$ contains a confederacy, a contradiction. So we may assume that $\C$ is a government but then $(P,P,L)$ is a harmonica from $P$ to $P'$ with government $\C$, a contradiction.
\qed\end{proof}

\begin{claim}\label{NotTriangle}
$V(P)\cup V(P')$ does not induce a triangle.
\end{claim}
\begin{proof}
Suppose it does. Let $\{z\}=V(P)\cap V(P')$ and let $P=xz$ and $P'=yz$. Thus $x$ is adjacent to $y$. By Claims~\ref{HarmInternal} and~\ref{HarmNoChord}, $V(G)=V(P)\cup V(P')$. 

Let $\C_0\subseteq \C$ be a government of $P$ and let $\phi_1\ne \phi_2\in \C_0$. Note that $\Phi_T(P',\C_0)\subseteq \Phi_T(P',\C)$. If $\C_0$ is a democracy, then for every $c\in L(y)- \{\phi_1(\aa{z}),\phi_2(\aa{z})\}$, $\Phi_T(P',\C_0)$ contains a dictatorship with dictator $y$ in color $c$. Hence if there are two such colors, $\Phi_T(P',\C_0)$ contains a confederacy, a contradiction. 
So  in this case, $|\C_0|=2$, $|L(y)|=3$ and $L(y)=\{c,\phi_1(\aa{z}),\phi_2(\aa{z})\}$ for some $c$, $\Phi_T(P',\C_0)$ contains a dictatorship with dictator $y$ in color $c$\aa{, and hence $T$ contains a harmonica from $P$ to $P'$ with government $\C_0$}.

If $\C_0$ is a dictatorship with dictator $z$ in color $a$, then $\Phi_T(P',\C_0)$ contains a dictatorship with dictator $z$ in color 
$a$\aa{, and $T$ contains a harmonica from $P$ to $P'$ with government $\C_0$}. 

Next we claim that if $\C_0$ is a dictatorship with dictator $x$ in color $b$, then $\Phi_T(P',\C_0)$ contains a confederacy, a contradiction, unless $|\C_0|=2$ and $L(y)=\{b,\phi_1(z),\phi_2(z)\}$, in which case \aa{$\Phi_T(P',\C_0)$} contains a democracy with colors $\{\phi_1(z),\phi_2(z)\}$,
\aa{and $T$ contains a harmonica from $P$ to $P'$ with government $\C_0$}. 
To see this, note that for every color $d\in L(y)- \{b,\phi_1(z),\phi_2(z)\}$, $\Phi_T(P',\C_0)$ contains a dictatorship with dictator $\aa{y}$ in color $d$. Thus if $\Phi_T(P',\C_0)$ does not contain a confederacy, there exists at most one such color as otherwise it contains two dictatorships with dictator $\aa{y}$ in two different colors. But if there exists only one such color, then there exist $d'\in L(y)- \{b,d\}$ since $|L(y)|\ge 3$. But then there exists $d''\in \{\phi_1(z),\phi_2(z)\} - \{d'\}$ and hence $\Phi_T(P',\C)$ contains a dictatorship with dictator $z$ in color $d''$, a contradiction since then $\Phi_T(P',\C_0)$ contains a confederacy, a contradiction. Finally note that $|\C_0|=2$ as otherwise there exists $\phi_3\in \C$ and the same arguments as above imply that $L(y)=\{b,\phi_1(z),\phi_3(z)\}$, a contradiction.

Thus if $\C$ is a government, the arguments above imply that $\Phi_T(P',\C)$ contains a government and furthermore that $\Phi_T(P',\C)$ contains a confederacy unless $T$ contains a harmonica from $P$ to $P'$ with government $\C$,
contradicting that $T$ is a counterexample.

So suppose that $\C=\C_1\cup \C_2$ is a confederacy where $\C_1,\C_2$ are governments. From above, there exist governments $\C_1'\subseteq \Phi_T(P',\C_1)$ and $\C_2'\subseteq \Phi_T(P',\C_2)$. Since $C_1'\cup \C_2' \subseteq \Phi_T(P',\C)$, it follows that $\C_1'$ and $\C_2'$ are either both democracies in the same colors, or they are both dictatorship with the same dictator in the same color. But then from the arguments above, it follows that the same is true of $\C_1$ and $\C_2$ and hence $\C$ is not a confederacy, a contradiction.

\qed\end{proof}

\begin{claim}\label{NoIntersect}
$V(P)\cap V(P')=\emptyset$.
\end{claim}
\begin{proof}
Suppose not.
By Claim~\ref{NotEqual} $P\ne P'$. Let $\{z\}=V(P)\cap V(P')$ and let $P=xz$ and $P'=yz$. By Claim~\ref{NotTriangle}, $x$ is not adjacent to $y$. That is to say that $P\cup P'$ is an induced path of length two and yet there does not exist a chord of $C$ by Claim~\ref{HarmNoChord}. By Lemma~\ref{WheelUniqueColor}, there exists a unique $L$-coloring $\phi_0$ of $P\cup P'$ that does not extend to an $L$-coloring of $G$.

Suppose there do not exist $\phi_1,\phi_2\in \C$ such that $\phi_1(z)\ne \phi_2(z)$. But then $\C$ is a dictatorship with dictator $z$, say in color $c$. Let $\psi_1(z)=\psi_2(z)=c$ and let $\{\psi_1(y),\psi_2(y)\}$ be a subset of $L(y)- \{c\}$ of size two. Thus $\C'=\{\psi_1,\psi_2\}$ is a dictatorship on $P'$ with dictator $z$ in color $c$. Moreover, there exists $\phi\in \C$ such that $\phi(x)\ne \phi_0(x)$. Thus we can extend $\psi_1$ and $\psi_2$ to $L$-colorings of $G$ by letting $\psi_1(x)=\psi_2(x)=\phi(x)$. Hence $\C'\subseteq \Phi_T(P',\C)$, \aa{and $(P\cup P', P,L)$ is a harmonica from $P$ to $P'$ with government $\C$,} a contradiction.

So we may assume that there exist $\phi_1,\phi_2\in \C$ such that $\phi_1(z)\ne \phi_2(z)$. Let $i\in \{1,2\}$ be
such that $\phi_i(z)\ne \phi_0(z)$. Hence there is a dictatorship $\C_1\subseteq \Phi(P',C)$ such that $\phi(z)=\phi_i(z)$ for all $\phi\in \C_1$.

Suppose $L(y)- \{\phi_1(z),\phi_2(z),\phi_0(y)\} \ne \emptyset$. Let $c$ be a color in $L(y)- \{\phi_1(z),\phi_2(z),\phi_0(y)\}$. Hence there exists a dictatorship $\C_2\subseteq \Phi(P',\C)$ such that $\phi(y)=c$ for all $\phi\in \C_2$. But then $\Phi(P',\C)$ contains the confederacy $\C_1\cup \C_2$, a contradiction.

So we may assume that $L(y)=\{\phi_0(y),\phi_1(z),\phi_2(z)\}$ as $|L(y)|\ge 3$. Hence, $\phi_0(y)\ne \phi_1(z),\phi_2(z)$. Thus the democracy $\C_3$ in colors $\phi_1(z),\phi_2(z)$ is in $\Phi(P',\C)$. But then $\Phi(P',\C)$ contains the confederacy $\C_1\cup \C_3$, a contradiction.
\qed\end{proof}

\begin{claim}
$\C$ is a government.
\end{claim}
\begin{proof}
Suppose not. Then $\C=\C_1\cup \C_2$ is a confederacy. 
Note that by Claim~\ref{NotEqual}, $P\ne P'$, and by Claim~\ref{NotTriangle}, $V(P)\cup V(P')$ does not induce a triangle.
\REM{Deleted: So we may suppose that $V(P)\cup V(P')$ does not induce a triangle.}
By the minimality of $T$, since $\Phi_T(P', \C_1)$ does not contain a confederacy,
there exists a harmonica $T'=(G',P,L)$ from $P$ to $P'$ with government  $\C_1$. Since there does not exist a chord of $C$ by Claim~\ref{HarmNoChord} and $V(P)\cup V(P')$ does not induce a triangle, we find that $G'=P\cup P'$,
contrary to Claim~\ref{NoIntersect}.
\qed\end{proof}

\begin{claim}
$\C$ is a dictatorship.
\end{claim}
\begin{proof}
Suppose not. Hence $\C$ is a democracy. Let $L_0$ be the colors of $\C$. Let $Q=q_1\ldots q_k$ be a maximal path in $C$ such that $E(Q)\cap E(P')=\emptyset$, $V(P)\subseteq V(Q)$, $L_0\subseteq L(v)$ for all $v\in V(Q)$. Note that $C$ is a cycle since $G$ is $2$-connected by Claim~\ref{Harm2Conn}.

We claim that $q_1\in V(P')$. Suppose not. Let $q_1v_1\in E(C)- E(Q)$. Let $Q_1$ be the shortest subpath of $Q$ that includes $q_1$ and both vertices of $P$. Let $T'=(G\setminus V(Q_1), v_1\REM{Deleted: +P'},L')$ be the democratic reduction
$T(Q_1,L_0,v_1)$ of $Q_1$ with democracy $L_0$ centered around $v_1$. Note that $P'\subseteq G\setminus V(Q_1)$ since $V(P)\cap V(P')=\emptyset$ by Claim~\ref{NoIntersect}. As $Q$ is maximal and $v_1\not \in V(Q)$, $L_0$ is not a subset of $L(v_1)$ as otherwise $Q+v_1$ would also be a path satisfying the above conditions, contradicting that $Q$ is maximal. Hence $|L'(v_1)| = |L(v_1)|-|L(v_1)\cap L_0| \ge 3-1=2$. Let $P''$ be a path of length one of the boundary of the outer face of $G\setminus V(Q_1)$ containing $v_1$\aa{, and 
let $T''=(G\setminus V(Q_1),P'',L')$}.
Now the set of $L'$-colorings of $P''$ contains a confederacy $\C'$. By the minimality of $T$, $\Phi_{\aa{T''}}(P',\C')$ contains a confederacy. By Lemma~\ref{DemRed}(2), every $L'$-coloring of $G\setminus V(Q_1)$ extends to an $L$-coloring of $G$. Thus every coloring in $\Phi_{T''}(P',\C')$ is in $\Phi_T(P',\C)$. Hence $\Phi_T(P',\C)$ contains a confederacy, a contradiction. This proves the claim that $q_1\in V(P')$. By symmetry, it follows that $q_k\in V(P')$.

Yet $V(P)\cap V(P')=\emptyset$ by Claim~\ref{NoIntersect}. So $q_1,q_k\not\in V(P)$. Let $c_1\in L(q_1)- L_0$ and $c_2\in L(q_k)- L_0$. Let $\C_1=\{\phi_1,\phi_1'\}$ where $\phi_1(q_1)=\phi_1'(q_1)=c_1$ and $\{\phi_1(q_k),\phi_1'(q_k)\}=L_0$. Similarly, let $\C_2=\{\phi_2,\phi_2'\}$ where $\phi_2(q_k)=\phi_2'(q_k)=c_2$ and $\{\phi_2(q_1),\phi_2'(q_1)\}=L_0$. Hence $\C_1$ and $\C_2$ are distinct governments of $P'$ and $\C'=\C_1\cup \C_2$ is a confederacy. 

Moreover, for all $\phi\in \C'$, $\phi\in \Phi_T(P',\C)$. To see this, consider the democratic reductions $T_1=T(Q\setminus q_1, L_0, q_1)$ and $T_2=T(Q\setminus q_k, L_0, q_k)$. By Theorem~\ref{ThomPath2}, every $L$-coloring of $q_1$ extends to an $L$-coloring of $T_1$ and every $L$-coloring of $q_k$ extends to an $L$-coloring of $T_2$. Thus if $\phi \in \C_1$, then $\phi$ extends to an $L$-coloring of $T_1$ as noted above and can then be extended to $Q\setminus q_1$ by using the colors of $L_0$. Hence $\phi\in \Phi_T(P',\C)$. Similarly if $\phi\in \C_2$, then $\phi$ extends to an $L$-coloring of $T_2$ as noted above and can the be extended to $Q\setminus q_k$ by using the colors of $L_0$. Hence $\phi\in \Phi_T(P',\C)$. Thus $\Phi_T(P',\C)$ contains a confederacy, a contradiction.
\qed
\end{proof}

Suppose without loss of generality that $p_1$ is the dictator of $\C$ in color $c$. 
Let $v_1,v_2$ be the vertices of $C$ adjacent to $p_1$. Suppose without loss of generality that $P=p_1v_1$.
Our next objective is to consider two democratic reductions centered at $p_1$,
one removing $v_1$ and the other removing $v_2$. However, for the one removing $v_2$ we
need to define a new canvas so as to ensure that $v_1$ has a proper list. Moreover, we
first have to show that $v_1, v_2 \not\in V(P')$ if we are to study the colorings that extend to $P'$ in
these reductions.
 Let $\phi_1,\phi_2\in\C$. \aa{Then} $c=\phi_1(p_1)$. Let $M_1=\{\phi_1(v_1),\phi_2(v_1)\}$. Let $L'(v_1)=M_1\cup\{c\}$, $L'(p_1)=\{c\}$ and let $L'(w)=L(w)$ otherwise. Let $T'=(G,p_1,L')$. Let $M_2$ be a subset of $L(v_2)-\{c\}$ of size two.

\begin{claim}
\label{v1v2notinP'}
Neither $v_1$ nor $v_2$ is in $P'$.
\end{claim}
\begin{proof}
Suppose not. By Claim~\ref{NoIntersect} \aa{it follows} that $v_2$ is in $P'$. Let $P'=v_2z$. Let $S'=p_1v_2z$ and let $T''=(G,S',L')$. Since there does not exist a chord of $C$ by Claim~\ref{HarmNoChord}, $S'$ is an induced path of length two. Thus by Lemma~\ref{WheelUniqueColor}, there exists at most one proper $L'$-coloring of $S'$, call it $\phi_0$, that does not extend to an $L'$-coloring of $G$. 

Let $c_1\in L(v_2)- \{c,\phi_0(v_2)\}$. Let $\C_1'=\{\psi_1,\psi_1'\}$ where $\psi_1(v_2)=\psi_1'(v_2)=c_1$ and $\{\psi_1(z),\psi_1'(z)\}$ is a subset of $L(z)-\{c_1\}$ of size two. Thus $\C_1'$ is a dictatorship on $P'$ with dictator $v_2$. Suppose that $M_2=\{c_2,c_3\}$. 
If there exists $c_4\in L(z)- (\{\phi_0(z)\} \cup \aa{M_2})$, then let $\C_2'=\{\psi_2,\psi_2'\}$ where $\psi_2(z)=\psi_2'(z)=c_4$ and $\{\psi_2(v_2),\psi_2'(v_2)\}=M_2$. Otherwise $L(z)=M_2\cup\{\phi_0(z)\}$. In that case, let $\C_2'=\{\psi_3,\psi_3'\}$ where $\psi_3(v_2)=\psi_3'(z)=c_2$ and $\psi_3'(v_2)=\psi_3(z)=c_3$. Thus in the first case, $\C_2'$ is a dictatorship with dictator $z$ and in the second case $\C_2'$ is a democracy. Thus in either case, $\C'=\C_1'\cup \C_2'$ is a confederacy. Yet in either case, $\C' \aa{\subseteq \Phi_{T'}(P',\C)}\subseteq \Phi_T(P',\C)$, since in every coloring in $\C'$ either $v_2$ or $z$ receives a color different from the color it receives in $\phi_0$, a contradiction.
\qed
\end{proof}

Now consider the democratic reductions $T_1=T'(v_1, M_1, p_1), T_2=T'(v_2, M_2, p_1)$. Suppose that $T_1=(G_1,p_1,L_1)$ and $T_2=(G_2,p_1,L_2)$. Let $T_1'=(G_1, p_1v_2, L_1)$ and $T_2'=(G_2,P,L_2)$. By the minimality of $T$, $\Phi_{T_2'}(P',\C)$ contains a government $\C_2$. Furthermore, either $\Phi_{T_2'}(P',\C)$ contains a confederacy, a contradiction as then so does $\Phi_T(P',\C)$, or, $T_2'$ contains a harmonica $T_2''=(G_2',P,L_2)$ from $P$ to $P'$ with government $\C$. 

Let $\C^*=\{\phi_1,\phi_2,\ldots \phi_k\}$ where $\phi_1(p_1)=\phi_2(p_1)=\ldots=\phi_k(p_1)=c$ and $\{\phi_1(v_2),\phi_2(v_2),\ldots,\allowbreak\phi_k(v_2)\} = L(v_2)- \{c\}$. By the minimality of $T$, where we consider the canvas $T_1'$, we find that $\Phi_{T_1'}(P',\C^*)$ contains a government $\C_1$. Furthermore, either $\Phi_{T_1'}(P',\C^*)$ contains a confederacy, a contradiction as then so does $\Phi_T(P',\C)$, or, $T_1'$ contains a harmonica $T_1''=(G_1',p_1v_2,L_1)$ from $P$ to $P'$ with government $\C^*$.

Let $P'=p_1'p_2'$ where $p_1'$ is on the subpath of $C$ from $p_2'$ to $v_1$ not containing $p_1$.

\begin{claim}
There exists $v\not\in C$ such that $v\sim p_1,v_1,v_2,p_1',p_2'$
\end{claim}
\begin{proof}
Let $W_1$ be the outer walk of $G_1'$ and $W_2$ be the outer walk of $G_2'$. 
\xx{Since $T_1''$ is obtained by an application of the third rule by Claim~\ref{v1v2notinP'}, the vertex $p_1$
has two neighbors $u_1,u_2$ such that $|L_1(u_i)|=3$ if $u_i\ne v_2$. But not both $u_1,u_2$ can
be adjacent to $v_1$ by planarity and Claim~\ref{HarmInternal}, and hence one of them, say $u_2$ belongs to $C$. It follows from Claim~\ref{HarmNoChord} 
that $u_2=v_2$. Thus $v_2\in W_1$ and $p_1$ is not a cutvertex of $G_1'$.}
Let $W_1'$ be the subwalk of $W_1$ from $p_1$ to $P'$ not containing $v_2$. Let $W_1''$ be the subwalk of $W_1$ from $p_1$ to $P'$  containing $v_2$. 

Note if $z\in V(G_1')-V(C)$, then $|\aa{L_1}(z)|\le 3$ \xx{by Lemma~\ref{L=3}}. Hence  $|L(z)|=5$, $|\aa{L_1}(z)|=3$, $M_1\subseteq L(z)$ and $z$ is adjacent to $v_1$. However if $z\in V(W_1'')- V(W_1')$, then $z$ is not adjacent to $v_1$ since $G$ is planar and hence $z\in V(C)$. 

We claim that there exists $w_1\in V(G_1')-V(C)$ such that $w_1$ is adjacent to $v_1$ and $p_2'$. To see this, note that there exists a vertex $w_1\in V(W_1')$ in a triangle $R$ in $G_1'$ such that either $R=w_1p_1'p_2'$ or $R=w_1p_i'u_1$ for some $i\in\{1,2\}$ where $u_1$ in $V(W_1'')- V(W_1')$. In the former case, $w_1\not\in V(C)$ by Claim~\ref{HarmNoChord} and hence $w_1$ is adjacent to $v_1$ as desired. 
In the latter case, it follows that that $u_1$ is not adjacent to $v_1$, and hence $u_1\in V(C)$
\xx{by the result of the previous paragraph}. But then
$i=2$ by Claim~\ref{HarmNoChord}. Moreover,
by Claim~\ref{HarmNoChord}, $w_1\not\in V(C)$ given that $w_1$ is adjacent to $u_1$ and hence $w_1$ is adjacent to $v_1$ as desired.

By symmetry, there exists $w_2\in V(G_2')$ such that $w_2$ is adjacent to $v_2$ and $p_1'$. As $G$ is planar, we find that $w_1=w_2$, call it $v$, and hence $v$ is adjacent to $u_1,u_2,v_1,v_2$. Moreover as $T_2''$ is a harmonica with government $\C$ and $\C$ is a dictatorship with dictator $p_1$, we find that $v$ is adjacent to $p_1$.
\qed\end{proof}

%

\begin{claim}
\label{cl:z1zk}
There exist vertices $v_2=z_1,z_2,...,z_k=p_2'$, all adjacent to $v$ such that for all $i=1,2,\ldots,k$ the canvas
$(G_1'\setminus \{p_1,z_1,...,z_{i-1}\},vz_i,L_1)$ is a harmonica from $vz_i$ to $P'$ with a democracy or a dictatorship depending on the parity of $i$. 
\end{claim}
\begin{proof}
The canvas $(G_1'\setminus p_1,vz_1,L_1)$ is a harmonica from $vz_1$ to $P'$ with a democracy obtained from $T_1''$ by
application of the third rule, $(G_1'\setminus \{p_1,v_2\},vz_2,L_1)$ is a harmonica from $vz_2$ to $P'$ with a dictatorship obtained from $(G_1'\setminus p_1,vz_1,L_1)$
by  application of the fourth rule, and so on. We note that the vertex $v$ cannot be the vertex that is being deleted during the construction of the next harmonica, 
\xx{because the next-to-last harmonica in the sequence leading up to $P'$ involves the vertex $v$.}
%
This proves Claim~\ref{cl:z1zk}.
\qed\end{proof}

We are now ready to complete the proof of  Theorem~\ref{Harmonica2}.
It follows that  $M_2$ is a subset of \xx{$L(v)$ and} $L(z_i)$ for all $i, 1\le i \le k$. Similarly there exist vertices $v_1=w_1,...,w_l=p_1'$ and $M_1$ is a subset of \xx{$L(v)$ and} $L(w_i)$ for all $i, 1\le i \le l$. Since $M_1\cup M_2\cup\{c\}=L(v)$  \xx{by Lemma~\ref{L=3}}, we see that $M_1$ and $M_2$ are disjoint. 
Since $|L(p_1')|=|L(p_2')|=3$ \xx{by Lemma~\ref{L=3}} as $T_1''$ and $T_2''$ are harmonicas, and $M_1\subseteq L(p_1')$ and $M_2\subseteq L(p_2')$, it follows that the last step in the construction of $T_1''$ is according to the second rule of the definition of harmonica. In other words, $p_2'$ is a dictator of $\C_1$. Similarly, $p_1'$ is a dictator of $\C_2$. Thus $\C_1\cup\C_2$ is a confederacy, as desired.
This completes the proof of Theorem~\ref{Harmonica2}.~\qed
\end{proof}

In the following lemma we 
clarify the relationship between harmonicas and coloring harmonicas before we can prove
Theorem~\ref{thm:harmon}.

\begin{lem}
\label{lem:harmonicas}
Let $T=(G,P,L)$ be a canvas and $P,P'$ be  paths of length one in the boundary of the outer face of $G$. 
Let $P=uv$, let $P'=ww'$, where $u,v,w$ are pairwise distinct, and let $T$ be a harmonica from $P$ to $P'$ with government $\C$.
Assume that $\Phi(P',\C)$ is a dictatorship with dictator $w$ in color $d$, that if $\C$ is a democracy, then $|L(u)|=|L(v)|=2$, and 
that if $\C$ is a dictatorship, then $u$ is the dictator, $|L(u)|=1$ and \xx{if $v$ has degree in $G$ of at least two, then} 
$|L(v)-L(u)|=2$.
Let $G'$ be obtained from $G$ by deleting either or both of $v$ and $w'$ if either has degree one in $G$,
let $L'(w)=L(w)-\{d\}$ and  $L'(x)=L(x)$ for all $x\in V(G')-\{w\}$.
If $\C$ is a dictatorship, then $(G',L')$ is a coloring harmonica from $u$ to $w$.
If $\C$ is a democracy, then $(G',L')$ is a coloring harmonica from $uv$ to $w$.
\end{lem}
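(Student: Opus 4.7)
The plan is to proceed by induction on $|V(G)|$, following the recursive structure of the harmonica $T$.

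First I would rule out the two base cases in the definition of a harmonica. The case $G=P=P'$ is incompatible with $u,v,w$ being pairwise distinct, since it would force $w\in\{u,v\}$. In the case $G=P\cup P'$ with $V(P)\cap V(P')=\{u\}$ (so $P'=wu$, as $u\neq w$), we would have $\Phi(P',\C)=\{(c_w,c):c_w\in L(w)-\{c\}\}$, where $c$ is the color of the dictator $u$; the canvas bound $|L(w)|\ge 3$ (since $w\notin V(S)=V(P)$) forces at least two distinct colors at $w$, contradicting that $\Phi(P',\C)$ is a dictatorship with dictator $w$. Hence the outermost rule of $T$ is the third (dictatorship-with-triangle) or the fourth (democracy-with-triangle), and $T$ contains a sub-harmonica on strictly fewer vertices to which induction can be applied.

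In the dictatorship case ($\C$ a dictatorship with dictator $u$ in color $c$, so $L(u)=\{c\}$), the third rule supplies a triangle $uu_1u_2$ on the outer face, a color set $L_0$ of size two, and a sub-harmonica $T^*=(G^*,U,L)$ from $U=u_1u_2$ to $P'$ with democracy $\C^*$ satisfying $\C^*(u_1)=\C^*(u_2)=L_0$. I would verify that $L(u_1)-\{c\}=L(u_2)-\{c\}=L_0$: when $u_i\notin V(P)$ the harmonica rule gives $L(u_i)=L_0\cup\{c\}$ directly; when $u_i=v$ the rule gives $\C(v)=L_0$, and since $\C(v)\subseteq L(v)-\{c\}$ has size two while the lemma hypothesis forces $|L(v)-L(u)|=2$ (as $v$ lies in the triangle and so has degree at least two), equality follows. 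Thus the triangle $uu_1u_2$ provides the first step of a coloring harmonica from $u$ to $w$: $|L(u)|=1$ and $L(u_1)-L(u)=L(u_2)-L(u)=L_0$ has size two.

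To produce the required subsequent coloring harmonica from $u_1u_2$ to $w$, I would apply induction to $T^*$. The lemma's democracy hypothesis requires $|L(u_1)|=|L(u_2)|=2$, which is not literally true (the harmonica rule typically gives $|L(u_i)|=3$). I would circumvent this by forming $\tilde L$ from $L$ by replacing $L(u_1),L(u_2)$ with $L_0$; this modification preserves the harmonica structure of $T^*$ (later rules in the harmonica definition do not consult the reduced values at $u_1,u_2$) and preserves $\Phi(P',\C^*)$ (every coloring extending $\C^*$ already assigns colors in $L_0$ to $u_1,u_2$). Induction applied to $(G^*,U,\tilde L)$ then delivers a coloring harmonica from $u_1u_2$ to $w$ with exactly the lists demanded by the coloring harmonica recursion, and its graph matches $G'\setminus u$ up to the optional degree-one deletions built into both constructions.

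The democracy case ($\C$ a democracy with $L(u)=L(v)=L_0$ of size two) is handled symmetrically: the fourth rule of the harmonica gives a vertex $z$ adjacent to $u$ and $v$ with $L(z)=L_0\cup\{c\}$ of size three, matching the second rule of the coloring harmonica from $uv$ to $w$ (triangle $uvz$ with $L(u)=L(v)\subseteq L(z)$ of sizes $2,2,3$); after deleting the prescribed $p_i$ and restricting $L(z)$ to $\{c\}$, the inductive hypothesis on the resulting dictatorship sub-harmonica yields a coloring harmonica from $z$ to $w$, which is exactly the continuation required.

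The main obstacle is the list-size mismatch between the two kinds of harmonica: $T$ has generous lists (size $3$ at interface vertices like $u_i$ and $z$) while the coloring harmonica demands pinpoint sizes ($2$ at triangle tips, $1$ at the entry vertex of the next step). The list-restriction trick before invoking induction is the crux, justified by the observation that shrinking $L$ to $\tilde L$ on these interface vertices changes neither the harmonica structure nor the extension set $\Phi(P',\cdot)$, so the inductive hypothesis transfers cleanly and the coloring harmonica conditions are matched step by step.
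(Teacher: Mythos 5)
Your proposal follows the same overall strategy as the paper: induction on $|V(G)|$, unrolling the harmonica one rule at a time, with a list-restriction step to reconcile the harmonica's size-three interface lists with the coloring harmonica's size-two requirement. You correctly rule out the first two harmonica rules, and you correctly identify the list-restriction trick as the crux.

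However, there is a genuine gap in the democracy case (which also subsumes the $|V(G)|=3$ base case). The fourth harmonica rule produces an apex $z$ adjacent to $u$ and $v$, and you propose to apply the inductive hypothesis to the resulting dictatorship sub-harmonica ``from $zy$ to $P'$'' to obtain a coloring harmonica from $z$ to $w$. But the lemma's hypotheses require the three distinguished vertices to be pairwise distinct, and when the harmonica is about to terminate at $P'$ one has exactly $z=w$: the apex of the final triangle is the vertex $w$ itself. In that situation the inductive hypothesis does not apply (and ``a coloring harmonica from $z$ to $w$'' is vacuous when $z=w$). Your blanket assertion that ``$T$ contains a sub-harmonica on strictly fewer vertices to which induction can be applied'' is therefore false in precisely this terminal case. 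What is needed instead, and what the paper does, is a direct verification: when $z=w$ the sub-harmonica $(G\setminus x, zy, L)$ is by the second harmonica rule, so $w'$ has degree one in $G$ and is deleted in forming $G'$; the remaining $G'$ is the triangle $uvw$ with $L'(u)=L'(v)=L'(w)=L_0$ of size two, matching the first (triangle) rule of the coloring harmonica definition directly. Adding this terminal subcase would close the gap; the rest of your argument tracks the paper's proof closely.
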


\begin{proof}
We proceed by induction on $|V(G)|$.
If $|V(G)|=3$, then, since $w$ is the dictator of $\Phi(P',\C)$, and $w\not\in V(P)$, we deduce that $T$ is obtained according to the 
fourth rule in the definition of harmonica. Thus $\C$ is a democracy, $u,v,w$ form a triangle, $L'(u)=L'(v)=L'(w)$ and $|L(u)|=2$.
Thus $(G',L')$ is a coloring harmonica from $uv$ to $w$.
We may therefore assume that  $|V(G)|\ge4$.

Assume now that $\C$ is a democracy. Thus $T$ is obtained according to the 
fourth rule in the definition of harmonica. Consequently, there exists a vertex $z$ adjacent to $u$ and $v$
such that $L(z)=L_0\cup \{c\}$ where $L_0=\C(u)=\C(v)$ and there exist $x,y$ such that $\{x,y\}=\{u,v\}$,
 the canvas $T'=(G\setminus x, U, L)$ is a harmonica with dictatorship $\C'=\{\phi_1,\phi_2\}$, where $U=zy$ and 
$\phi_1(z)=\phi_2(z)=c$ and $\{\phi_1(y),\phi_2(y)\}=L_0$.
It follows that $L(u)=L(v)=L_0$.
If $z=w$, then $T'$ is obtained according to the second rule in the definition of harmonica, and hence $w'$ has degree 
one in $G$. It follows that $(G',L')$ is a coloring harmonica from $uv$ to $w$, because $G'$ is obtained from $G$
by deleting $w'$.
We may therefore assume that $z\ne w$.

Thus $T'$ is obtained according to the third rule in the definition of harmonica. Let $u_1,u_2$ be as in that rule,
and let $G_1'=G'\setminus x$ if $y\in\{u_1,u_2\}$ and $G_1'=G'\setminus \{x,y\}$ otherwise. 
Let $L_1(z)=L(z)-L_0$ and $L_1(z')=L(z')$ for all $z'\in V(G)-\{z\}$,
and let 
$L_1'(x')=L'(x')$ for all $x'\in V(G_1')$.
It follows by induction applied to the canvas $(G\setminus x, U, L_1)$ that $(G_1',L_1')$ is a coloring harmonica from $z$ to $w$, 
and hence $(G',L')$ is a coloring harmonica from $uv$ to $w$, as desired.

We may therefore assume that $\C$ is a dictatorship. Since $w\not\in V(P)$, it follows that $T$ is obtained according to the 
 third rule in the definition of harmonica. Thus  there exists a triangle $uu_1u_2$ and letting $c$ denote the color in which $u$
is the dictator,
for $i=1,2$ we have $L(u_i)=L_0\cup\{c\}$ if $u_i\not\in V(P)$ and $\C(u_i)=L_0$ otherwise,
where $|L_0|=2$ and the canvas $(G\setminus (V(P)- V(U)), U,L)$ is a harmonica from $U=u_1u_2$ to $P'$ with democracy $\C'$, where $\C'(u_1)=\C'(u_2)=L_0$. Let us note that $|L(u)|=1$ and if $u_i\in V(P)$, then $u_i=v$, and hence $|L(u_i)-L(u)|=2$.
Let $G_1'=G'\setminus u$, let $L_1(u_1)=L_1'(u_1)=L(u_1)-L(u)$, $L_1(u_2)=L_1'(u_2)=L(u_2)-L(u)$, and let $L_1(u')=L(u')$ and 
$L_1'(u')=L'(u')$  for all $u'\in V(G_1')-\{u_1,u_2\}$.
It follows by induction applied to the canvas $(G\setminus (V(P)- V(U)), U,L_1)$ that $(G_1',L_1')$ is a coloring harmonica from $u_1u_2$ to $w$, 
and hence $(G',L')$ is a coloring harmonica from $u$ to $w$, as desired.
\qed
\end{proof}

\bigskip
\noindent{\bf Proof of Theorem~\ref{thm:harmon}.}
We prove only the backward direction as the forward direction is trivial. So let $G,p_1,p_2,L$ be as in the statement of the theorem and suppose for the sake of contradiction that $G$ is not $L$-colorable and yet $(G,L)$ does not contain a coloring harmonica from $p_1$ to $p_2$. 

Let $p_1v_1,p_2v_2\in E(C)$ and let $P=p_1v_1$ and $P'=p_2v_2$. Let $c\in L(p_1)$. 
Let $\C_c=\{\phi_1,\phi_2, \ldots \phi_k\}$ where $\phi_i(p_1)=c$ for all $i, 1\le i \le k$, and $\{\phi_1(v_1),\phi_2(v_2),\ldots, \phi_k(v_1)\} = L(v_1)- \{c\}$. 
Note that $\C_c$ is a dictatorship on $P$ with dictator $p_1$ in color $c$. 
\aa{If there exists $c'\in L(p_1)-\{c\}$, then let $\C=\C_c\cup\C_{c'}$, in which case $\C$ is a confederacy;
otherwise let $\C=\C_c$, in which case  $\C$ is a dictatorship on $P$ with dictator $p_1$ in color $c$. }
Let $c_0$ be a new color and let $L'(p_2)=L(p_2)\cup \{c_0\}$\aa{, and let $L'(x)=L(x)$ for all $x\in V(G)-\{p_2\}$}. 
Let $T=(G,P,L')$. By Theorem~\ref{Harmonica2}, $\Phi_T(P',\C)$ contains a government $\C'$. Furthermore, either $\Phi_T(P',\C)$ contains a confederacy $\C''$, or,
\aa{$\C$ is a government and}
 there exists a harmonica $T'=(G',P,L')$ from $P$ to $P'$ with government $\C$. 

In the former case, there exists a coloring $\phi\in \C''$ such that $\phi(p_2)\ne c_0$. But then $\phi$ extends to an $L'$-coloring of $G$ as $\phi \in \Phi_T(P',\C)$ and yet $\phi$ is an $L$-coloring of $G$ since $\phi(p_2)\ne c_0$, a contradiction. 
So we may assume the latter case. \aa{Thus $|L(p_1)|=1$, because $\C$ is a government; and
\xx{if $v_1$ has degree at least two in $G$, then}  $k=2$, because $T'$
is a harmonica obtained according to the third rule.} As above, there does not exist a coloring $\phi\in \C'$ such that $\phi(p_2)\ne c_0$. Hence $\C'$ is a dictatorship with dictator $p_2$ in color $c_0$. Let $G''$ be obtained from $G'$ by deleting either or both of $v_1$ and $v_2$ if either has degree one in $G'$. Now $(G'',L)$ is a coloring harmonica from $p_1$ to $p_2$  \aa{by Lemma~\ref{lem:harmonicas}}, a contradiction.
\qed

%

\xx{%
\section*{Acknowledgment}
The results of this paper form part of the doctoral dissertation~\cite{PosPhD} of the first author,
written under the guidance of the second author.}

\baselineskip 11pt
\vfill
\noindent
This material is based upon work supported by the National Science Foundation.
Any opinions, findings, and conclusions or
recommendations expressed in this material are those of the authors and do
not necessarily reflect the views of the National Science Foundation.
\eject

\end{document}